\numberwithin{equation}{section}
\theoremstyle{plain}
\newtheorem{theorem}{Theorem}[subsection]
\newtheorem*{thm}{Theorem}
\newtheorem{corollary}[theorem]{Corollary}
\newtheorem{lemma}[theorem]{Lemma}
\newtheorem{proposition}[theorem]{Proposition}
\newtheorem{prop}{Proposition}[section]
\newtheorem{lem}{Lemma}[section]
\theoremstyle{remark}
\newtheorem*{remark}{Remark}
\newcommand{\R}{\mathbb{R}}
\newcommand{\C}{\mathbb{C}}
\newcommand{\Rd}{\mathbb{R}^d}
\newcommand{\Z}{\mathbb{Z}}
\newcommand{\Lt}[1][d]{L^2(\R^{#1})}
\renewcommand{\l}{\lambda}
\renewcommand{\L}{\Lambda}
\newcommand{\vol}{\textnormal{vol}}
\DeclareMathOperator*{\supp}{supp}
\DeclareMathOperator*{\sinc}{sinc}
\begin{document}

\title[]{Maximal polarization for periodic configurations on the real line}

\author[]{Markus Faulhuber }
\address{NuHAG, Faculty of Mathematics, University of Vienna, Oskar-Morgenstern-Platz 1, 1090 Vienna, Austria}
\email{markus.faulhuber@univie.ac.at}

\author[]{Stefan Steinerberger }
\address{Department of Mathematics, University of Washington, Seattle, WA 98195-4350, USA}
\email{steinerb@uw.edu}

\begin{abstract}
    We prove that among all 1-periodic configurations $\Gamma$ of points on the real line $\mathbb{R}$ the quantities
    $$ \min_{x \in \mathbb{R}} \sum_{\gamma \in \Gamma} e^{- \pi \alpha (x - \gamma)^2} \quad \text{and} \quad  \max_{x \in \mathbb{R}} \sum_{\gamma \in \Gamma} e^{- \pi \alpha (x - \gamma)^2}$$
    are maximized and minimized, respectively, if and only if the points are equispaced and whenever the number of points $n$ per period is sufficiently large (depending on $\alpha$). This solves the polarization problem for periodic configurations with a Gaussian weight on $\mathbb{R}$ for large $n$. The first result is shown using Fourier series. The second result follows from work of Cohn and Kumar on universal optimality and holds for all $n$ (independent of $\alpha$).
\end{abstract}

\subjclass[2020]{52C25, 74G65, 82B21} 
\keywords{equispaced points, Gaussian, periodic configuration, polarization}
\thanks{M.F.\ is supported by the Austrian Science Fund (FWF) grant P33217. S.S.\ is supported by the NSF (DMS-2123224) and the Alfred P.\ Sloan Foundation.}

\maketitle

\section{Introduction and main result}\label{sec:intro}
We study the following question: for fixed $\alpha > 0$, among all periodic configurations of points $\Gamma$ with given density on the real line, for which one is the function
\begin{equation}\label{eq:p_alpha}
    p_\alpha(x) = \sum_{\gamma \in \Gamma} e^{-\pi \alpha (x - \gamma)^2}
\end{equation}
as close to constant as possible?  Factoring out scales, periodicity and symmetries, this is equivalent to the problem of placing $n$ points on $\mathbb{T} \cong \mathbb{S}^1$ so that
\begin{equation}\label{eq:f_alpha}
    f_\alpha(x) =\sum_{j=1}^n  \sum_{k \in \Z} e^{-\pi \alpha k^2} e^{2 \pi i k (x - x_j)}
\end{equation}
is as close to constant as possible. The equivalence of the two problems arises from the duality between \eqref{eq:p_alpha} and \eqref{eq:f_alpha} caused by the Poisson Summation Formula, which we explain in detail in \S \ref{sec:notation}. We note that $\eqref{eq:f_alpha}$ can be expressed by means of the Jacobi theta function $\theta(x;\alpha)$ (details are given in \S \ref{sec:notation});
\begin{equation}
    \theta_\alpha (x) = \theta(x;\alpha) = \sum_{k \in \Z} e^{-\pi \alpha k^2} e^{2 \pi i k x}.
\end{equation}
The problem arises naturally in a variety of settings, see \S \ref{sec:results}. Such problems are often related to optimal sphere packing/covering.
Since sphere packing in one dimension is trivial, one would expect equispaced points to be optimal. Indeed, Cohn and Kumar \cite{CohKum07} showed that equispaced points on the line are \textit{universally optimal}. Their result can be applied in our setting.

\begin{prop}[Application of Cohn and Kumar \cite{CohKum07}]\label{pro:Cohn-Kumar}
    Among all periodic configurations $\Gamma \subset \R$ of the form
    $$\Gamma = \frac{n}{\delta} \left(\bigcup_{k=1}^n(\Z+x_k) \right), \qquad \{x_1, \ldots , x_n \} \subset [0,1), \, x_k \neq x_\ell, \, k \neq \ell,$$
    of fixed density $\delta > 0$  and for any fixed parameter $\alpha > 0$, the quantity
    $$\max_{x \in \R} \, p_\alpha(x) = \max_{x \in \R} \sum_{\gamma \in \Gamma} e^{-\pi \alpha (x-\gamma)^2} \qquad \text{ is minimized}$$
    if and only if the points are equispaced. Moreover, among all sets of $n$ points on the torus $\mathbb{T} \cong \mathbb{S}^1$ and for any fixed parameter $\alpha > 0$, the quantity
    $$\max_{x \in \mathbb{T}} f_\alpha(x) = \max_{x \in \mathbb{T}}\sum_{j=1}^n  \theta_{\alpha}(x - x_j) \qquad \mbox{is minimized}$$
    if and only if the points are equispaced.
\end{prop}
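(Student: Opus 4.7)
My plan is to reduce the supremum problem to a pair-interaction energy, at which point Cohn and Kumar's universal optimality \cite{CohKum07} applies directly. Starting from the elementary bound that the global maximum of $p_\alpha$ dominates the average of any finite sample of its values, I sample at the points of $\Gamma$ lying in a fundamental period. Writing $\widetilde{x}_j = (n/\delta) x_j$ for the representatives of the cosets, we have
\[
    \max_{x \in \R} p_\alpha(x) \;\geq\; \frac{1}{n}\sum_{j=1}^n p_\alpha(\widetilde{x}_j) \;=\; 1 \,+\, \frac{1}{n}\sum_{j=1}^n \sum_{\gamma \in \Gamma \setminus \{\widetilde{x}_j\}} e^{-\pi\alpha(\widetilde{x}_j - \gamma)^2},
\]
since the diagonal terms $\gamma = \widetilde{x}_j$ contribute exactly $n$. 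The remaining double sum is the Gaussian pair-energy per point of the periodic set $\Gamma$; as $s \mapsto e^{-\pi\alpha s}$ is completely monotonic, the universal optimality of the integer lattice \cite{CohKum07} ensures that this energy is strictly minimized, among all periodic sets of density $\delta$ with $n$ points per period, by the equispaced configuration $\Gamma_{\mathrm{eq}} = (1/\delta)\Z$.

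Converting this pair-energy bound into a bound on maxima requires verifying that for $\Gamma = \Gamma_{\mathrm{eq}}$ the sampling inequality above is actually an equality. Translation symmetry makes $p_\alpha^{\mathrm{eq}}$ constant on $\Gamma_{\mathrm{eq}}$, and Poisson summation yields the representation
\[
    p_\alpha^{\mathrm{eq}}(x) \;=\; \frac{\delta}{\sqrt{\alpha}}\,\theta\!\left(\delta x;\,\tfrac{\delta^2}{\alpha}\right),
\]
whose maxima are attained precisely on the lattice $\Gamma_{\mathrm{eq}}$ because $y \mapsto \theta(y;\beta)$ is globally maximized at $y \in \Z$ for every $\beta > 0$. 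Chaining the two inequalities produces $\max_x p_\alpha(x) \geq \max_x p_\alpha^{\mathrm{eq}}(x)$, and any configuration achieving equality must saturate both intermediate steps. Tightness of Cohn-Kumar then forces $\Gamma = \Gamma_{\mathrm{eq}}$, giving the claimed uniqueness of the minimizer.

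The torus formulation follows by the verbatim argument with $p_\alpha$ replaced by $f_\alpha$ and $\R$ by $\mathbb{T}$, or alternatively via the duality between \eqref{eq:p_alpha} and \eqref{eq:f_alpha} that will be established in Section \ref{sec:notation}. I expect the main conceptual hurdle to be the very first step, the passage from a supremum to a symmetric pair-sum: Cohn-Kumar controls energies, not suprema, and the choice to evaluate $p_\alpha$ at the points of $\Gamma$ itself is the only bridge available. A small point to confirm along the way is that $p_\alpha^{\mathrm{eq}}$ really is maximized on the lattice (rather than at the midpoints), which is immediate from the $\theta$-representation above.
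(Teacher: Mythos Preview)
Your proposal is correct and follows essentially the same route as the paper: the max-versus-average inequality evaluated at the configuration points, Cohn--Kumar universal optimality for the resulting pair-interaction energy, and the observation (via Poisson summation and the $\theta$-representation) that for the equispaced set the maximum of $p_\alpha$ is attained on the lattice, making the chain of inequalities sharp. The paper organizes these as its three ingredients (1)--(3) in \S\ref{sec:proof_Cohn-Kumar}, and your separation of the diagonal term and explicit handling of uniqueness via tightness of Cohn--Kumar match the paper's argument in all essential respects.
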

This result is not surprising, it is exactly what one would expect. However, to the best of our knowledge no ``easy" proof of the theorem of Cohn and Kumar is known. As a consequence, since our proof of Proposition \ref{pro:Cohn-Kumar} makes use of the result of Cohn and Kumar, we do not currently have an ``elementary'' proof. We refer to \S \ref{sec:energy} for an in-depth discussion of this result and give the proof in \S \ref{sec:proof_Cohn-Kumar}.

\medskip

Proposition \ref{pro:Cohn-Kumar} is concerned with minimizing the maximum.
The main result of our paper is the dual, maximizing the minimum, which we prove in the regime when the number of points is sufficiently large, where ``sufficiently large" depends only on the width $\alpha$ of the Gaussian. The proof is given in \S \ref{sec:proof_main}.

\begin{thm}[Main Result]
    For $n \geq N(\alpha)$ (depending only on $\alpha$) and among all 1-periodic configurations $\Gamma \subset \R$ of density $n$, i.e.,
    \begin{equation}
        \Gamma = \bigcup_{k=1}^n (\Z + x_k), \qquad \{x_1, \ldots , x_n \} \subset [0,1), \, x_k \neq x_\ell, \, k \neq \ell,
    \end{equation}
    the quantity
    \begin{equation}
        \min_{x \in \R} p_\alpha(x) = \min_{x \in \R} \sum_{\gamma \in \Gamma} e^{-\pi \alpha (x-\gamma)^2}, \, \alpha > 0, \qquad \text{is maximized}
    \end{equation}
    if and only if the points are equispaced. Moreover, for $n \geq N(\alpha)$ sufficiently large (depending only on $\alpha$)
    $$ \min_{x \in \mathbb{T}} f_\alpha(x) = \min_{x \in \mathbb{T}}\sum_{j=1}^n  \theta_{\alpha}(x - x_j) \qquad \text{is maximized}$$
    if and only if the points are equispaced.
\end{thm}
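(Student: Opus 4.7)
The plan is to carry out a Fourier analysis of $f_\alpha$. Writing $T_k = \sum_{j=1}^n e^{-2\pi i k x_j}$ (so $T_0 = n$ and $|T_k| \leq n$), the Fourier expansion reads $f_\alpha(x) = \sum_{k \in \Z} e^{-\pi\alpha k^2} T_k e^{2\pi i k x}$. For the equispaced configuration $x_j = (j-1)/n$, the coefficients collapse to $T_k = n$ if $n \mid k$ and $T_k = 0$ otherwise, giving $f_\alpha^{eq}(x) = n\,\theta_{\alpha n^2}(nx)$ and
\[
\min_x f_\alpha^{eq}(x) \;=\; n\,\theta_{\alpha n^2}(1/2) \;=\; n - 2n\, e^{-\pi\alpha n^2} + O\bigl(n\, e^{-4\pi\alpha n^2}\bigr),
\]
attained at the $n$ points $x_j^* = (2j-1)/(2n)$. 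So the equispaced dip below $n$ is very small, of order $n\, e^{-\pi\alpha n^2}$.

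I would next invoke Newton's identities to observe that a configuration is equispaced (up to rotation) if and only if $T_k = 0$ for all $k \in \{1, \ldots, n-1\}$. Hence for any non-equispaced configuration there is some smallest $k^* \in \{1, \ldots, n-1\}$ with $T_{k^*} \neq 0$. The central step is to exhibit a test point $x_0$ with $f_\alpha(x_0) < \min f_\alpha^{eq}$. I would pick $x_0$ so as to align the $k^*$-th Fourier mode in the maximally negative direction, yielding the heuristic bound
\[
f_\alpha(x_0) \;\leq\; n - 2\,|T_{k^*}|\,e^{-\pi\alpha (k^*)^2} \;+\; R(x_0),
\]
where $R(x_0)$ collects the contributions of the modes with $k \neq \pm k^*$. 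Since $(k^*)^2 \leq (n-1)^2$, the weight $e^{-\pi\alpha(k^*)^2}$ dominates $e^{-\pi\alpha n^2}$ by the enormous factor $e^{\pi\alpha(2n-1)}$; thus, as long as $|T_{k^*}|$ is not extraordinarily small and $R(x_0)$ can be controlled (say by averaging $x_0$ over a short interval centered at the aligned point, damping off-mode contributions through sinc-type factors), the inequality $f_\alpha(x_0) < \min f_\alpha^{eq}$ follows.

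The remaining case is when \emph{every} $|T_k|$ with $k \in \{1, \ldots, n-1\}$ is super-exponentially small. Such configurations must be exponentially close to equispaced, and I would handle them by a perturbation analysis: write $x_j = (j-1)/n + \varepsilon_j$ with $\sum_j \varepsilon_j = 0$ and expand $f_\alpha$ to second order in the displacements $\varepsilon_j$. Because $f_\alpha^{eq}$ attains its minimum at $n$ distinct points of equal value, the first-order variation of $\min_x f_\alpha$ is the minimum of $n$ linear functionals of $\varepsilon$; a symmetry argument forces these to sum to zero, so at least one is non-positive. One then verifies, through the second-order term, that the Hessian is negative definite on the complement of the translation direction once $n \geq N(\alpha)$, so any nonzero perturbation strictly decreases $\min_x f_\alpha$.

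The principal obstacle will be making the Fourier estimate uniformly sharp: the remainder $R(x_0)$ must be controlled tightly enough that the threshold for $|T_{k^*}|$ at which the Fourier argument succeeds is sharper than the threshold below which the perturbation regime takes over. A secondary subtlety is the translation invariance of the problem, which forces the comparison with $f_\alpha^{eq}$ to be carried out after a judicious choice of reference shift (naturally, one matching the phase of $T_n$), and the multi-valued character of $\min_x f_\alpha^{eq}$ introduces envelope-theorem book-keeping in the perturbation step.
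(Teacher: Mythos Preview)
Your two-part architecture (Fourier estimate when some $|T_k|$ is large, perturbation analysis when all $|T_k|$ are tiny) matches the paper's, but both halves contain genuine gaps that the paper resolves with ideas you do not supply.

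\textbf{Part 1.} Aligning a single test point $x_0$ with the phase of $T_{k^*}$ does not control the remainder $R(x_0)$: nothing prevents other modes $k\neq k^*$ with $|T_k|\,e^{-\pi\alpha k^2}$ of comparable size from contributing positively at that very $x_0$ and cancelling your gain (your sinc-averaging only damps frequencies far from $k^*$, not the nearby ones). Choosing $k^*$ to be the \emph{smallest} index with $T_{k^*}\neq 0$ makes this worse, since $|T_{k^*}|$ can be arbitrarily small while $|T_{k^*+1}|=n$. The paper avoids pointwise alignment altogether: from the pigeonhole bound $\min_x g(x)\le -\tfrac12\|g\|_{L^1}$ for mean-zero $g$, together with the elementary (or McGehee--Pigno--Smith) estimate $\|g\|_{L^1}\ge \tfrac{1}{cn}\sum_{0<|k|\le n} e^{-\pi\alpha k^2}|T_k|$, one gets that a maximizing configuration must satisfy $|T_k|\lesssim n^2 e^{-\pi\alpha(n^2-k^2)}$ for all $1\le k\le n-1$. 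This handles \emph{all} low modes simultaneously and produces the exponential smallness you need without ever choosing a test point.

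\textbf{Part 2.} Your first-order/symmetry observation only gives that at least one of the $n$ linear functionals is $\le 0$, not $<0$; and a ``Hessian negative definite'' statement is ill-posed here because $\min_x f_\alpha$ is a minimum of $n$ smooth functions and is not $C^2$ in $\varepsilon$ (the envelope-theorem bookkeeping you mention is exactly where the argument breaks). The paper's substitute is a quantitative trigonometric lemma: the leading perturbation $g_1(x)=\sum_{1\le|k|\le (n-1)/2} e^{-\pi\alpha k^2}T_k\,e^{2\pi ikx}$ is a real trigonometric polynomial of degree at most $(n-1)/2$, and any such polynomial satisfies
\[
\min_{0\le k\le n-1} g_1\!\left(\tfrac{k+1/2}{n}\right)\ \le\ -\,\frac{\|g_1\|_{L^2}}{3n^2},
\]
proved via a modified Poincar\'e inequality. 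Coupled with the DFT identity $\sum_{k=1}^{n-1}\bigl|\sum_j \varepsilon_j e^{-2\pi i jk/n}\bigr|^2 = n\sum_j \varepsilon_j^2$ (which both lower-bounds $\|g_1\|_{L^2}$ and, via the symmetry $k\leftrightarrow n-k$, shows $\|g_2\|_{L^\infty}\ll\|g_1\|_{L^2}$), this forces a strict decrease at one of the fixed points $(k+1/2)/n$ whenever $\varepsilon\neq 0$. That is the mechanism replacing your Hessian step, and it is where the restriction $n\ge N(\alpha)$ is actually used.
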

Just as in Proposition \ref{pro:Cohn-Kumar}, the two statements are dual by the Poisson Summation Formula. We remark that the parameter $\alpha$ in the result for $p_\alpha$ corresponds to $1/\alpha$ in the statement for $f_\alpha$ (see \S \ref{sec:notation}). Note that the results are invariant under global shifts $z$ as the sets
$\{x_1, \ldots , x_n\}$ and $\{x_1+z, \ldots , x_n +z \}$
both yield the same \textit{energy} and \textit{polarization} (see \S \ref{sec:results}). Also, \textit{equispaced} is always understood periodically. The argument is structurally completely different from the Cohn and Kumar framework \cite{CohKum07} of universal optimality, the proof invokes very different tools.
The main obstacles when establishing our results are:
\begin{enumerate}[(a)]
    \item the location of the minimum depends on the $x_j$ in a complicated way and
    \item for equispaced points the difference between minimum and mean is super-exponentially small in $n$, which forces an analysis on very small scales.
\end{enumerate}

The proof of the main result is completely Fourier-analytic which makes it somewhat robust and applicable to a wider range of functions than just the Gaussian function; if one has, generally, a function of the type
	$$g(x) = \sum_{k \in \Z} \widehat{g}(|k|) e^{2 \pi i k x},$$
with $\widehat{g}(|k|)$ decaying sufficiently fast (say, faster than exponential), then much (but not all) of the argument carries over verbatim. For simplicity of exposition, the remainder of the paper only deals with the Gaussian case which is arguably the most natural. The proof is explicit enough that bounds on $N(\alpha)$ could be obtained, however, since one would naturally assume that the result is true for all $n \geq 1$, independently of the value of $\alpha$, we will not track this dependency. The condition $n \geq N(\alpha)$ is necessary in many different steps of our argument and it appears that an unconditional argument for all $n \geq 1$ would require some new ideas. Of course the case $n=1$ is trivial and we provide a proof valid for all $\alpha > 0$ when $n=2$ in \S \ref{sec:n}. It appears that already the case $n=3$ poses some nontrivial difficulties.

\section{Related results}\label{sec:results}

\subsection{Energy minimization}\label{sec:energy}
Energy minimization problems have received much attention in recent years. A seminal result due to Cohn, Kumar, Miller, Radchenko, and Viazovska \cite{Coh-Via22} states that the $\mathsf{E}_8$-lattice and Leech lattice are universally optimal in their respective dimension, meaning that they uniquely minimize energy $E_g(\Gamma)$ among periodic configurations $\Gamma$ and for a large class of (radial) potential functions $g$. A periodic configuration in $\Rd$ is the union of finitely many shifted copies of a lattice $\L$. We recall that a lattice is discrete co-compact subgroup of $\Rd$ and its density is $1/\vol(\Rd/\L)$ and refer to the textbook of Conway and Sloane \cite{ConSlo98} for an introduction to lattices. The energy of a periodic configuration
$$\Gamma = \bigcup_{k=1}^n (\L + x_k), \quad \{x_1, \ldots , x_n \} \in \Rd/\L, \  x_k \neq x_j, \  k \neq j,$$
is given by
\begin{equation}\label{eq:energy}
    E_g(\Gamma) = \frac{1}{n} \sum_{k=1}^n\sum_{j=1}^n \sum_{\l \in \L \backslash \{x_j-x_k\}} g(|\l+x_j-x_k|).
\end{equation}
So, it is the pairwise interaction of the points under the potential $g$ excluding self-interactions (as the potential may be singular at the origin). We refer to \cite{CohKum07, Coh-Via22} for details on the energy minimization problem and to the textbook of Conway and Sloane \cite{ConSlo98} for an introduction to lattices, packing problems and covering problems as well as to the article of Schuermann and Vallentin \cite{SchVal04}. In \cite{CohKum07} Cohn and Kumar showed that on the real line $\mathbb{R}$ (and at all scales) the scaled integer lattice is universally optimal. They obtained their result by constructing a ``magic function" (using a version of the classical sampling theorem) which proved that the linear programming bounds for the problem (obtained in the same work) are indeed sharp for the scaled integer lattice. An alternative proof, also given in \cite{CohKum07} is via spherical designs. Numerically, the hexagonal lattice also meets the linear programming bound for the energy minimization problem in dimension 2. However, a proof of its universal optimality is still missing. The results are linked to optimal sphere packings and the linear programming bounds for the sphere packing problem obtained by Cohn and Elkies \cite{CohElk03}. In seminal work, the sphere packing problem in dimension 8 was solved by Viazovska \cite{Via17} and in dimension 24 by Cohn, Kumar, Miller, Radchenko, and Viazovska \cite{Coh-Via17}. The problem of energy minimization has also been treated on the sphere $\mathbb{S}^{d-1} \subset \Rd$, which in the case of $d=2$ is a problem of distributing points on the circle line $\mathbb{S}^1 \cong \mathbb{T}$. Often, for general $d \geq 2$, a connection to spherical $t$-designs is given when distributing points on a sphere. We refer to the review by Brauchart and Grabner \cite{BraGra15} and to Hardin and Saff \cite{HarSaf05} for the classical problem of Riesz energy minimization. More recent results on energy minimizing point distributions on spheres were obtained by Beltr\'an and Etayo \cite{BelEta20} or Bilyk, Glazyrin, Matzke, Park, and Vlasiuk \cite{Biletal22}. For spherical $t$-designs we refer to the breakthrough of Bondarenko, Radchenko, and Viazovska \cite{BonRadVia13} and to work of the second author \cite{Stein0} for upper bounds.

\subsection{Polarization problems}
The polarization problem asks to place light sources such that the darkest point has maximal illumination. Often such problems are considered for compact manifolds, such as the sphere. We refer, e.g., to articles, published in different constellations, by Borodachov, Boyvalenkov, Hardin, Reznikov, Saff, and Stoyanova \cite{Bor22, BorHarRezSaf18, BorHarSaf19, BoyDraHarSafSto23}. For more numerical investigations and algorithms we refer to the work by Rolfes, Sch\"uler and Zimmermann \cite{RolSchZim23}. The problem of polarization for Riesz potentials and lattices in $\Rd$ was asked by Saff (cf.\ Problem 1.06 in the collection curated by \textit{American Institute of Mathematics} for the workshop \textit{Discrete Geometry and Automorphic Forms} \cite{AIM}).  We note that many physically important potentials, such as the Riesz potential, can be written as a Laplace transform of a non-negative measure $\mu$. More precisely, any completely monotone function $f: \R_+ \to \R_+$, meaning $(-1)^kf^{(k)}(x) \geq 0$, $\forall k \geq 1$, is the Laplace transform of a non-negative Borel measure as a consequence of the Bernstein--Widder theorem \cite{bernstein, Wid41} (see also the textbook of Schilling, Song, and Vondracek \cite[Chap.~1]{SchSonVon10}). Some results on polarization on $\mathbb{S}^1$ for sufficiently fast decaying and convex potentials have been obtained in \cite[Chap.~14.3] {BorHarSaf19}.
We remark that the Gaussian potential does not fall into the class of completely monotone functions as it is not convex. However, by adjusting distance to squared distance, we get completely monotone functions of squared distance, i.e., $r \mapsto g(r^2)$ where $g$ is completely monotone (compare \cite{Coh-Via22}):
\begin{equation}
    g(r) = \int_0^\infty e^{-\alpha r} \, d\mu(\alpha).
\end{equation}
As remarked in \cite[Sec.~1.2]{Coh-Via22}, it may seem more natural to take completely monotone functions of distance, rather than squared distance, but using squared distance allows for the use of the Gaussian function. In fact, one can check that any completely monotone function of distance is also a completely monotone function of squared distance. We refer to \cite[Sec.~1.2]{Coh-Via22} for this fact and more details. As an example we name the Riesz potentials, also known as inverse power laws, which are obtained as (compare again, e.g., \cite{Coh-Via22})
\begin{equation}
    \frac{1}{r^s} = \int_0^\infty e^{-\alpha r^2} \frac{\alpha^{s/2-1}}{\Gamma(s/2)} \, d\alpha.
\end{equation}
If our result were to hold for all $\alpha > 0$ (when $n$ is fixed), one would immediately have a corresponding result for Riesz potentials as well as the whole class of completely monotone functions of squared distance (given sufficiently fast decay):
\begin{equation}
    \sum_{\gamma \in \Gamma} \left( \int_{0}^\infty e^{-\alpha \gamma^2} \, d\mu(\alpha) \right) = \int_0^\infty \left(\sum_{\gamma \in \Gamma} e^{-\alpha \gamma^2} \right) \, d\mu(\alpha).
\end{equation}

\subsection{Lattices in \texorpdfstring{$\mathbb{R}^2$}{R2}}
Despite the seminal work of Cohn, Kumar, Miller, Radchenko and Viazovska \cite{Coh-Via22} and overwhelming numerical evidence, the universal optimality of the hexagonal lattice, also known as $A_2$ root lattice or sometimes triangular lattice, is still open to date. The best available result is due to Montgomery \cite{Mon88} and states that the hexagonal lattice is optimal among lattices at all scales. More recently, the polarization problem among 2-dimensional lattices has been solved by the authors in joint work with B\'etermin \cite{BetFauSte21}. Local optimality of the hexagonal lattice for lattice polarization and certain potential functions has been derived by the authors in \cite{FauSte19}. In \cite{BetFau23}, B\'etermin and the first author showed that the hexagonal lattice maximizes \textit{Madelung-like} lattice energies (lattice points have alternating signs). This result is somewhat in-between the result of Montgomery \cite{Mon88} and the joint result of the authors with B\'etermin \cite{BetFauSte21} as it does neither clearly relate to sphere packing nor covering. Related results concerning the Lennard--Jones potential (see B\'etermin and Zhang \cite{BetZha15}), which is $r \mapsto r^{-12} - 2 r^{-6}$ and neither non-negative nor monotonic nor convex, show that for different densities different geometrical arrangements can be optimal. This phenomenon is widely called \textit{phase transition}. Some physically relevant consequences of the conjectured universal optimality of the hexagonal lattice (and proven optimality of $\mathsf{E}_8$ and Leech lattice) are discussed by Petrache and Serfaty \cite{PetSer20}. A general survey is given by Lewin and Blanc \cite{lewin}.

\subsection{Heat Equation Sampling}
Our result solves the following problem on $\mathbb{S}^1$ as a byproduct. The problem was originally discussed by Pausinger and the second author \cite{florian} on $\mathbb{T}^2$. Suppose there is an unknown distribution of heat $f \in L^1(\mathbb{S}^1)$ and we are interested in estimating the total heat $\int_{\mathbb{S}^1}f(x)dx$. If the function $f$ is only in $L^1$ then no effective sampling strategies are possible. If we now assume, however, that some time $t>0$ has passed, then the solution of the heat equation $e^{t\Delta}f$ with $f$ as initial condition satisfies
$$ \int_{\mathbb{S}^1} f(x) dx = \int_{\mathbb{S}^1} \left[e^{t\Delta}f\right](x) dx$$
and is also a more regular function for which sampling strategies should be possible.

\begin{corollary}
For any $t>0$ and all $n \geq N(t)$ sufficiently large (depending only on $t$) the worst case sampling error
$$ \sup_{f \in L^1(\mathbb{S}^1)}    \frac{1}{\|f\|_{L^1}} \left|   \frac{1}{n} \sum_{k=1}^{n} \left[ e^{t\Delta} f \right](x_k) - \int_{\mathbb{S}^1} f(x) dx \right| $$
is minimized if and only if the sampling points $\left\{x_1, \dots, x_n\right\}$ are equispaced.
\end{corollary}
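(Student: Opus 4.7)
The plan is to express the worst-case sampling error as the $L^\infty$-norm of an explicit kernel, then invoke Proposition \ref{pro:Cohn-Kumar} and the Main Result in tandem. First I would use that on $\mathbb{S}^1 \cong \mathbb{R}/\mathbb{Z}$ the heat kernel admits the Fourier expansion
\begin{equation*}
K_t(x) = \sum_{k \in \mathbb{Z}} e^{-4\pi^2 k^2 t} e^{2 \pi i k x} = \theta_{4 \pi t}(x),
\end{equation*}
so that $[e^{t \Delta} f](x_k) = (f \ast K_t)(x_k)$. Since $K_t$ is even and integrates to $1$, swapping sum and integral yields
\begin{equation*}
\frac{1}{n} \sum_{k=1}^n [e^{t \Delta} f](x_k) - \int_{\mathbb{S}^1} f(y) \, dy = \int_{\mathbb{S}^1} f(y) \left( \frac{1}{n} f_{4 \pi t}(y) - 1 \right) dy,
\end{equation*}
with $f_\alpha$ as in \eqref{eq:f_alpha}. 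Standard $L^1$--$L^\infty$ duality on the compact group $\mathbb{S}^1$ then shows that the supremum of the left-hand side over $f$ with $\|f\|_{L^1} = 1$ equals $\bigl\| \tfrac{1}{n} f_{4 \pi t} - 1 \bigr\|_{L^\infty(\mathbb{S}^1)}$.

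Next, since $\tfrac{1}{n} f_{4 \pi t} - 1$ is continuous with mean zero on $\mathbb{S}^1$, its $L^\infty$-norm equals
\begin{equation*}
\max\!\left( \tfrac{1}{n} \max_{x \in \mathbb{S}^1} f_{4 \pi t}(x) - 1, \; 1 - \tfrac{1}{n} \min_{x \in \mathbb{S}^1} f_{4 \pi t}(x) \right).
\end{equation*}
Proposition \ref{pro:Cohn-Kumar} applied with $\alpha = 4\pi t$ says that the first quantity in this maximum is minimized exactly by equispaced configurations, and the Main Result, valid provided $n \geq N(4 \pi t) =: N(t)$, says that the second quantity is also minimized exactly by equispaced configurations. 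Hence both arguments of the maximum, and therefore the maximum itself, are minimized at the equispaced configuration.

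For the \emph{only if} direction, both underlying optimality statements are strict: any non-equispaced configuration produces a strictly larger maximum of $f_{4\pi t}$ and a strictly smaller minimum of $f_{4\pi t}$. Consequently the larger of the two terms at a non-equispaced configuration strictly exceeds the larger at the equispaced one, yielding uniqueness (up to rotation of $\mathbb{S}^1$). The main obstacle is really just bookkeeping: identifying the heat kernel on $\mathbb{S}^1$ with the theta function and matching parameters $\alpha = 4\pi t$; once that is done, the corollary is a direct consequence of combining the two optimality results.
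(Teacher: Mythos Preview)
Your proof is correct and follows essentially the same approach as the paper: rewrite the sampling error as the pairing of $f$ against the kernel $\frac{1}{n}\sum_k \theta_\alpha(\cdot - x_k) - 1$, use $L^1$--$L^\infty$ duality to identify the worst-case error with the $L^\infty$-norm of that kernel, and then invoke Proposition~\ref{pro:Cohn-Kumar} and the Main Result to minimize the maximum and maximize the minimum simultaneously. Your version is slightly more explicit than the paper's about the ``only if'' direction and about the parameter matching $\alpha = 4\pi t$ (the paper simply writes $\theta_t$), but the substance is identical.
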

\begin{proof}
Interpreting the solution of the heat equation as a Fourier multiplier, 
    \begin{align*}
           \frac{1}{n} \sum_{k=1}^{n} \left[ e^{t\Delta} f \right](x_k) &= \left\langle e^{t\Delta} f, \frac{1}{n} \sum_{k=1}^{n} \delta_{x_k} \right\rangle   = \left\langle  f,  \frac{1}{n} \sum_{k=1}^{n} e^{t\Delta} \delta_{x_k} \right\rangle.   
    \end{align*}
The solution of the heat equation started with a Dirac delta is the Jacobi $\theta$-function
$$  \left[e^{t\Delta} \delta_{x_k}\right](x) = \theta_t(x-x_k) $$
and thus
\begin{align*}
    \left\langle  f,  \frac{1}{n} \sum_{k=1}^{n} e^{t\Delta} \delta_{x_k} \right\rangle &=
    \left\langle  f,  \frac{1}{n} \sum_{k=1}^{n} \theta_t(x-x_k) \right\rangle =  \left\langle  f,  1 + \frac{1}{n} \sum_{k=1}^{n} \left(\theta_t(x-x_k) -1\right) \right\rangle\\
    &= \int_{\mathbb{S}^1} f~ dx + \left\langle  f,  \frac{1}{n} \sum_{k=1}^{n} \left( \theta_t(x-x_k) -1 \right)\right\rangle. 
\end{align*}
Using $L^1-L^{\infty}$ duality, we arrive that
$$ \sup_{f \in L^1(\mathbb{S}^1)} \left| \left\langle  f,  \frac{1}{n} \sum_{k=1}^{n} \left( \theta_t(x-x_k) -1 \right)\right\rangle \right| =  \left\|\frac{1}{n} \sum_{k=1}^{n} \left( \theta_t(x-x_k) -1 \right) \right\|_{L^{\infty}}.$$
Our results show that the maximum is minimized and the minimum is maximized if and only if the points are equispaced. This implies the statement.
\end{proof}
\begin{remark}
    It was pointed out to us by one of the referees that we can drop the condition $n \geq N(t)$ as  Proposition \ref{pro:Cohn-Kumar} is sufficient in order to prove the above corollary for any $t$ and all $n$. The argument goes along the same lines as above, but then continues in the following way. We need to show that
    \begin{equation}\label{eq:remark}
        \left\| \sum_{k=1}^n \theta_t(x-x_k) - n \right\|_{L^\infty} \geq \left\| \sum_{k=1}^n \theta_t\left(x-\frac{k}{n}\right) - n \right\|_{L^\infty}
    \end{equation}
    Using a trivial estimate and then Proposition \ref{pro:Cohn-Kumar} we get
    \begin{equation}
        \left\| \sum_{k=1}^n \theta_t(x-x_k) - n \right\|_{L^\infty} \hspace{-7pt}
        \geq \max_x \left( \sum_{k=1}^n \theta_t(x-x_k) - n \right) \geq \max_x \left( \sum_{k=1}^n \theta_t\left(x-\frac{k}{n}\right) - n \right)
    \end{equation}
    In order to show \eqref{eq:remark} it now suffices to show that
    \begin{align}
        & \max_x \sum_{k=1}^n \theta_t\left(x-\frac{k}{n}\right) - n \geq n - \min_x \sum_{k=1}^n \theta_t\left(x-\frac{k}{n}\right)\\
        \Longleftrightarrow \qquad
        & \max_x \sum_{k=1}^n \theta_t\left(x-\frac{k}{n}\right) + \min_x \sum_{k=1}^n \theta_t\left(x-\frac{k}{n}\right) \geq 2n.
    \end{align}
    It is a remarkable property of the theta function (cf.~\cite[Chap.~4, eq.~(22)]{ConSlo98}) that 
    \begin{equation}
        \max_x \sum_{k=1}^n \theta_t\left(x-\frac{k}{n}\right) + \min_x \sum_{k=1}^n \theta_t\left(x-\frac{k}{n}\right) = \max_x \sum_{k=1}^{2n} \theta_t\left(x-\frac{k}{2n}\right) \geq 2n
    \end{equation}
    The details of this argument are provided in \S \ref{sec:notation} and \S \ref{sec:proof_Cohn-Kumar}. The crucial property is that, in the equispaced case, the minimum is achieved exactly midway between the points and the maximum at the points themselves.
\end{remark}

\subsection{Shift invariant systems}
A shift invariant system $V^2(g)$ on $\R$ with a generator $g \in \Lt[]$ is a space of functions of the form
\begin{equation}
    V^2(g) = \{ f(x) = \sum_{k \in \Z} c_k \, g(x-k) \mid (c_k) \in \ell^2(\Z) \}.
\end{equation}
An example is the classical Paley--Wiener space $PW(\R)$ of band-limited functions, i.e., $\supp(\widehat{f}) \subset [-1/2,1/2]$, which is generated by $\sinc(x) = \sin(\pi x)/(\pi x)$.
For a set $\Gamma \subset \R$, we say that it is a set of sampling for $V^2(g)$ if and only if there exist positive constants $0<A\leq B<\infty$, depending on $g$ and $\Gamma$, such that
\begin{equation}
    A \| f \|_{\Lt[]}^2 \leq \sum_{\gamma \in \Gamma} |f(\gamma)|^2 \leq B \|f\|_{\Lt[]}^2, \quad \forall f \in V^2(g).
\end{equation}
For the motivation of (non-uniform) sampling in $V^2$ we refer to the article by Aldroubi and Gröchenig \cite{AldGro01}. Characterizing sampling sets for given generator $g$ is a very difficult problem. A necessary condition is that the (lower Beurling) density of the set is at least 1. The case of density 1 is referred to as critical sampling. For a large class of functions, including the Gaussian function $x \mapsto e^{-\alpha x^2}$, $\alpha > 0$, the problem was solved by Gr\"ochenig, Romero, and St\"ockler \cite{GroRomSto18}. The case of critical sampling with Gaussian generator is treated by Baranov, Belov, and Gr\"ochenig \cite{BarBelGro22}.

Our results suggest that for the space $V^2(\phi_\alpha)$, where $\phi_\alpha$ is a Gaussian, the bound $B$ is minimal and $A$ is maximal for equispaced sampling. Lastly, we mention the relatively new area of dynamical sampling introduced by Aldroubi, Cabrelli, Molter, and Tang \cite{AldCabMolTan17}. This combines the sampling problem with dynamical systems. In particular, we find connections between the heat equation and the sampling problem, as described by Aldroubi, Gr\"ochenig, Huang, Jaming, Krishtal, and Romero~\cite{Aldetal21}. Ulanovskii and Zlotnikov \cite{UlaZlo21} described sampling sets for $PW(\R)$ so that $f$ can be reconstructed from samples of $f*\varphi_t$, where $\varphi_t$ is a convolution kernel of a dynamical process. It would be interesting to see how our results connect to this area.

\section{Notation and remarks}\label{sec:notation}
\subsection{Basic notation}
To clarify normalization, we note that we use the following version of the Fourier transform of a suitable function $f$ on the real line:
\begin{equation}\label{eq:FT}
    \widehat{f}(\omega) = \int_{\R} f(x) e^{-2 \pi i \omega x} \, dx,
    \qquad \text{ so } \qquad
    \|f\|_{\Lt[]} = \|\widehat{f}\|_{\Lt[]}.
\end{equation}
Thus, the Poisson Summation Formula reads (see, e.g., Gr\"ochenig \cite[Chap.~1.4]{Gro01})
\begin{equation}
    \sum_{k \in \Z} f(k+x) = \sum_{\ell \in \Z} \widehat{f}(\ell) \, e^{2 \pi i \ell x}.
\end{equation}
The Fourier transform of a Gaussian is another Gaussian, differently scaled (see, e.g., Folland \cite[App.~A]{Fol89}):
\begin{equation}
    \text{if} \quad
    \phi_\alpha(x) = e^{-\pi \alpha x^2}, \; \alpha > 0,
    \quad \text{then} \quad
    \widehat{\phi_\alpha}(\omega) = (1/\sqrt{\alpha}) \ e^{-(\pi/\alpha) \omega^2} = (1/\sqrt{\alpha}) \ \phi_{1/\alpha}(\omega).
\end{equation}
The periodization of $\phi_\alpha$ will be called a periodic Gaussian: $\sum_{k \in \Z} \phi_{\alpha}(x+k)$.
A periodic configuration $\Gamma \subset \R$ with period $\delta$ is a set of points of the following form:
\begin{equation}
    \Gamma = \bigcup_{k=1}^n (\L + x_k),
    \quad \text{ where } \quad
    \L = \delta \Z , \, \delta > 0,
    \quad
    x_k \neq x_j, \, k \neq j,
    \quad
    x_k \in [0, \delta).
\end{equation}
The density $\rho$ of a configuration $\Gamma$ is the number $n$ of points per period 
$\rho = n/\delta.$

\subsection{Polarization on the real line}
We are now interested in the following polarization problem: which periodic configuration of fixed density $\rho$ maximizes
\begin{equation}
    \min_x \frac{1}{\sqrt{\alpha}}~ \sum_{\gamma \in \Gamma} \phi_{1/\alpha}(x-\gamma) \; ?
\end{equation}
We quickly note that, fixing the amounts of points per period, a minimizer always exists by compactness. We call the above quantity the polarization of $\Gamma$ and seek to find the maximal polarization. In general, the minimum depends on $\Gamma$ and its density $\rho$ as well as on $\alpha$. For equidistributed points, however, the minimum is always achieved midway between successive points (as we will prove as part of the proof of the main result).
The polarization may more explicitly be written in one of the following ways:
\begin{align}
    \min_x \frac{1}{\sqrt{\alpha}} \sum_{\gamma \in \Gamma} \phi_{1/\alpha}(x-\gamma)
    & = \min_x \frac{1}{\sqrt{\alpha}} \sum_{j=1}^n \sum_{k \in \Z} e^{-\pi \frac{\delta^2}{\alpha} \left(k+\frac{x_j-x}{\delta}\right)^2}\label{eq:polar_1}\\
    & = \min_x \frac{1}{\delta} \sum_{j=1}^n \sum_{k \in \Z} e^{- \pi \frac{\alpha}{\delta^2} k^2} e^{2 \pi i k \frac{(x_j-x)}{\delta}}\label{eq:polar_2},
\end{align}
where the second equality is due to the Poisson Summation Formula. Note that in this explicit formula $\{x_1, \ldots , x_n\} \subset [0,\delta)$. By identification of a configuration $\Gamma$ with $(x_1, \ldots , x_n) \in (\delta \mathbb{T})^n$ we see that a maximizing configuration must exist by compactness. Clearly, neither the factor $1/\sqrt{\alpha}$ nor the factor $1/\delta$ are of relevance for the minimization process or determination of the maximizing configuration. We will next show that for \eqref{eq:polar_1} and any fixed $n$, $\alpha > 0$, and $\delta > 0$ there is always an equivalent problem with the same $n$, $\delta = 1$ and different $\alpha$. For $\Gamma = \bigcup_{j=1}^n (\delta \Z + x_j)$, $x \in [0,\delta)$, $x_j \in [0,\delta)$ we simply write
\begin{align}
    \sum_{j=1}^n \sum_{k \in \Z} e^{- \pi \alpha \left(\delta k + x_j - x\right)^2}
    & = \sum_{j=1}^n \sum_{k \in \Z} e^{-\pi \widetilde{\alpha} \left( k + \widetilde{x}_j - \widetilde{x} \right)^2},
\end{align}
where $\widetilde{x} = x/\delta \in [0,1)$, $\widetilde{x}_j = x_j/\delta \in [0,1)$ and $\widetilde{\alpha} = \alpha \delta^2$. We see that we may thus assume that the points $\{x_1, \ldots , x_n\}$ are distributed in $[0,1)$ and that $\Gamma$ is 1-periodic (and of density $n$). Using the Poisson Summation Formula we see that finding the optimal configuration for \eqref{eq:polar_2} is the same as maximizing
\begin{equation}
    \min_x \sum_{j=1}^n \sum_{k \in \Z} e^{-\pi \alpha k^2}e^{2 \pi i k (x_j-x)} = \min_x \sum_{j=1}^n \theta_\alpha(x_j-x).
\end{equation}
This is (up to flipping the argument) exactly the quantity $f_\alpha(x)$ from \eqref{eq:f_alpha} considered in our main result. Note that by the Poisson Summation Formula
\begin{equation}
    f_\alpha(x) = \frac{1}{\sqrt{\alpha}} \, p_{1/\alpha}(x).
\end{equation}

\subsection{Theta functions}
The problem can be written as a variational problem for a finite superposition of real-valued theta functions. For parameter $\tau \in \mathbb{H}$ (complex upper half-plane) and argument $z \in \C$ the classical theta function is
\begin{equation}
    \vartheta(z;\tau) = \sum_{k\in \Z} e^{\pi i \tau k^2}e^{2 \pi i k z}.
\end{equation}
This function is holomorphic in $\tau$ and entire in $z$. For $\tau = i \alpha$, $\alpha > 0$ and $z = x \in \R$ the function becomes real-valued and we use the notation:
\begin{equation}
	\theta(x;\alpha) = \sum_{k \in \Z} e^{-\pi \alpha k^2} e^{2 \pi i k x} = \sum_{k \in \Z} e^{-\pi \alpha k^2} \cos(2 \pi  k x) = 1 + 2 \sum_{k \geq 1} e^{-\pi \alpha k^2} \cos(2 \pi k x).
\end{equation}
Note that the function $\theta(x;\alpha)$ is the heat kernel on the flat torus $\R/\Z$. As such it has mean value 1, which is easily verified by a small computation;
\begin{equation}
    \int_0^1 \sum_{k \in \Z} e^{-\pi \alpha k^2} e^{2 \pi i k x} \, dx = \sum_{k \in \Z} e^{-\pi \alpha k^2} \int_0^1  e^{2 \pi i k x} \, dx = \sum_{k \in \Z} e^{-\pi \alpha k^2} \delta_{k,0} = 1,
\end{equation}
where $\delta_{k,0}$ is the Kronecker delta. The function $\vartheta(z;\tau)$ and, hence, $\theta(x;\alpha)$ can be expressed as an infinite product known as the Jacobi triple product, which is a special case of the Macdonald identities for affine root systems~\cite{Mac71}:
\begin{align}
	\vartheta(z;\tau) & = \prod_{k \geq 1} \left(1 - e^{2 k \pi i \tau} \right) \left(1 + e^{(2k -1) \pi i \tau} e^{2 \pi i z} \right) \left(1 + e^{(2k -1) \pi i \tau} e^{-2 \pi i z} \right)\\
	& = \prod_{k \geq 1}  \left(1 - e^{2 k \pi i \tau} \right) \left(1 + 2 \cos(2 \pi z) e^{(2k-1)\pi i \tau} + e^{2 (2k-1) \pi i \tau} \right).
\end{align}
We refer to textbooks of Mumford \cite{Mum_Tata_I}, Stein and Shakarchi \cite{SteSha03}, or Whittaker and Watson \cite{WhiWat69} for more details on elliptic functions.

\section{Proof of Proposition \ref{pro:Cohn-Kumar}}\label{sec:proof_Cohn-Kumar}
Proposition \ref{pro:Cohn-Kumar} follows relatively easily from the work of Cohn and Kumar \cite{CohKum07} and the Poisson Summation Formula. The heart of the argument has three ingredients:
\begin{enumerate}[(1)]
    \item first, universal optimality shows that, for any fixed $\alpha > 0$, the interaction energy
    $$ \frac{1}{n} \sum_{k,j=1}^n  \theta_\alpha(x_j-x_k) \qquad \mbox{is minimized for equispaced points.}$$
    \item The second ingredient is a trivial estimate that arises from replacing an average (arithmetic mean) of values by its maximum
    \begin{equation}\label{eq:max-mean}
        \max_{x}  \sum_{k=1}^n  \theta_\alpha(x-x_k) \geq  \frac{1}{n} \sum_{j=1}^n \sum_{k=1}^n  \theta_\alpha(x_j-x_k).
    \end{equation}
    \item The third ingredient is that (2) is \textit{sharp} whenever the points are equispaced (which, simultaneously, by universal optimality, minimizes the lower bound in (2) just above). There is a magic ingredient where, for equispaced points, the maximum of $\sum_{k=1}^{n} \theta_{\alpha}(x-x_k)$ is attained at the points $x_j$ themselves.
\end{enumerate}
We remark that the counterpart to (1) is false for the minimization problem. Likewise, regarding (3), the location of the minimum depends in a highly nonlinear fashion on the location of the points. Understanding the minimum and the considered polarization problem thus requires a different approach.

\begin{proof}
(1) We note that the energy for the potential $\Phi = 1/\sqrt{\alpha} \ \phi_{1/\alpha}$ is given by
\begin{equation}\label{eq:energy_theta}
    E_{\Phi}(\Gamma) = \frac{1}{n} \sum_{k=1}^n\sum_{j=1}^n \sum_{\ell \in \Z} \frac{1}{\sqrt{\alpha}}\phi_{1/\alpha}(\ell+x_j-x_k) = \frac{1}{n} \sum_{k=1}^n \sum_{j=1}^n \theta_\alpha(x_j-x_k),
\end{equation}
where the second equality comes from the Poisson Summation Formula. The potentials are sitting on the periodic configuration $\Gamma$. However, not only their sum is considered but all their pairwise interactions and the sum over all of them. The condition $\l \in \L \backslash \{x_k-x_j\}$ in \eqref{eq:energy} excludes self-interaction as the potential function $g$ is allowed to be singular at 0 (this is also of physical relevance).
For the Gaussian, we may allow self-interaction (which adds a fixed additive constant determined by normalization, but independent of $\Gamma$) and we do not need to exclude it. If $\Gamma_0 = \bigcup_{j=1}^n (\Z + (j-1)/n) = (1/n) \Z$, then the energy can be written as (after applying the Poisson Summation Formula)
\begin{equation}
    E_{\Phi}(\Gamma_0) = \frac{1}{n} \sum_{k=1}^n \sum_{j=1}^n \theta_\alpha \left(\ell+\frac{j-k}{n} \right) = \frac{1}{n} \sum_{k=1}^n \sum_{j=1}^n \theta_\alpha \left(\ell+\frac{j}{n} \right) = \sum_{j=1}^n \theta_\alpha \left(\ell+\frac{j}{n} \right),
\end{equation}
where the second and third equalities are due to the periodicity of $\theta_\alpha$. The universal optimality of the (scaled) integers due to Cohn and Kumar \cite{CohKum07} states, for all $\alpha > 0$,
\begin{equation}\label{eq:Cohn-Kumar}
    E_\Phi(\Gamma_0) \leq E_\Phi(\Gamma)
    \quad \text{ with equality if and only if } \quad
    \Gamma = \Gamma_0 + z, \ z \in \R.
\end{equation}
Note that the result in \cite{CohKum07} as well as ours also hold for arbitrary scaling.

(2) is a trivial observation and does not require any more details.

(3) For $\Gamma_0$ the maxima of $p_\alpha$ (or likewise $f_\alpha$) are attained at the equispaced points $\{0, 1/n, \ldots , (n-1)/n\}$ (compare Proposition \ref{pro:minimum_1/2}). This follows by a simple application of the Poisson Summation Formula and the triangle inequality.  This allows for various additional tools to be used, in particular, it allows for a lossless application of the triangle inequality. We give the proof for the integers $\Z$ but the proof can easily be adjusted to scaled integers $\delta \Z$ (replace $k$ by $k/\delta$ and adjust the Poisson Summation Formula accordingly). We perform the following small computation:
\begin{align}\label{eq:maximum_equispaced}
    \frac{1}{\sqrt{\alpha}} \, p_{1/\alpha}(x) & =  f_{\alpha}(x) = \sum_{k \in \Z} e^{-\pi \alpha k^2} e^{2 \pi i k x} \leq \sum_{k \in \Z} e^{-\pi \alpha k^2} \left|e^{2 \pi i k x}\right|\\
    & = \sum_{k \in \Z} e^{-\pi \alpha k^2} = f_{\alpha}(0) = \frac{1}{\sqrt{\alpha}} \, p_{1/\alpha}(0),
    \qquad \forall \alpha > 0.
\end{align}
So, the maximum is attained at 0 and by periodicity at all points in $\Z$ (or $\delta \Z$).
\begin{figure}[h!t]
    \includegraphics[width=.95\textwidth]{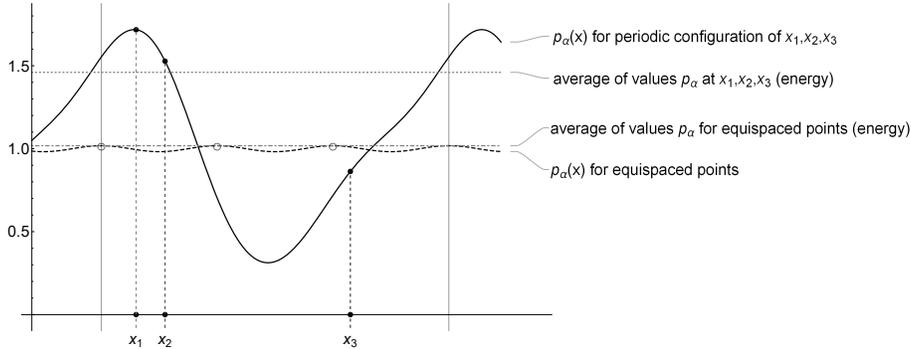}
    \captionsetup{width=0.99\linewidth}
    \caption{Illustration of the result of Cohn and Kumar \cite{CohKum07}. Building the average of $p_\alpha(x)$ at the points $\{x_1, \ldots , x_n\}$ (in this case $n=3$) for periodic, non-equispaced configuration always yields a larger value than for the equispaced points. As we sum $n$ times the maximum in the equispaced case, it follows that the maximum of $p_\alpha(x)$ is minimal only for the equispaced configuration.}
    \label{fig:energy}
    \vspace{-11pt}
\end{figure}

Note that $E_\Phi(\Gamma)$ builds the average of all values taken on $\Gamma$ (see Figure \ref{fig:energy}).  Now recall that \eqref{eq:maximum_equispaced} tells us that for the equispaced configuration $\Gamma_0$ the maximum is attained exactly on $\Gamma_0$. It readily follows from \eqref{eq:max-mean}, \eqref{eq:energy_theta} and \eqref{eq:Cohn-Kumar} that
\begin{equation}
    \max_{x} f_\alpha(x) = \max_{x} \sum_{j=1}^n \theta_\alpha \left(x+x_j\right)
    \quad \text{ is minimal if and only if $\Gamma$ is equispaced.}
\end{equation}
This gives Propositions \ref{pro:Cohn-Kumar} as a simple consequence of the result in \cite{CohKum07}. \end{proof}

\section{Proof of the Main Result}\label{sec:proof_main}
We start with an overall overview of the argument. It is fairly modular and the subsections reflect its overall structure. We also emphasize that, due to the fast decay of the Fourier coefficients, the argument is somewhat forgiving when it comes to polynomial estimates in the number of points. As a consequence, some of the subsequent proofs are given in its simplest rather than their optimal form. The main argument comes in two parts: the first part shows that optimizing configurations have to be exponentially close (in $n$) to the equispaced distribution. The structure of the first part is as follows.
\begin{enumerate}
    \item \S \ref{sec:minimizer} uses some basic facts about theta-functions. We show that if the points are equispaced, then the minimum is attained exactly at the midpoints between the equispaced points. This then allows us to deduce
    $$   \min_x \sum_{j=1}^n  \theta_{\alpha}\left(x - \frac{j}{n}\right) = n - 2 n e^{-\pi \alpha n^2}  + \mathcal{O}(n e^{-4 \pi \alpha n^2})$$
    which already shows some of the difficulty: the difference between the average and the minimum can be super-exponentially small in $n$.
    
    \item \S \ref{sec:L1-estimate} introduces a trivial $L^1$-estimate (essentially pigeonholing) and a nontrivial estimate: the McGehee--Pigno--Smith inequality \cite{mcgehee}, and independently discovered by Konyagin \cite{Kon82}. It was pointed out to us by an anonymous referee that the McGehee--Pigno--Smith inequality can be avoided and we present this more elementary argument as well.
    
    \item\label{item:estimate} \S \ref{sec:small_Fourier_coeff} combines these ingredients to prove that if $\left\{x_1, \dots, x_n\right\} \subset [0,1)$ is an optimal configuration (meaning one maximizing the minimum), then the first $n-1$ Fourier coefficients of the measure $\mu = \sum_{j=1}^{n} \delta_{x_j}$ must be small, more precisely
    $$ \max_{1 \leq |k| \leq n-1}  \left| \sum_{j=1}^{n} e^{-2 \pi i k x_j} \right| \leq 2000 \cdot n^2 \cdot e^{- \pi \alpha (2n-1)}.$$ 
    
    \item We note that for equispaced points the first $n-1$ Fourier coefficients all vanish. \S \ref{sec:regular_gaps} proves a basic estimate, invoking the classical Fej\'er kernel, showing if the first $n-1$ Fourier coefficients of $\mu$ are close to 0, then the $n$ points are (quantitatively) close to $n$ equispaced points. Since the estimate from (\ref{item:estimate}) is extremely small, exponentially small in $n$, we get that any optimal configuration has to be exponentially close to equispaced.
\end{enumerate}

\medskip

The second part of the proof shows that the only configuration that is exponentially close (in $n$) to the equispaced distribution and has maximal polarization is the equispaced distribution: this part can be understood as a detailed analysis of the perturbative regime. The main idea lies in making the ansatz $x_j = j/n + \varepsilon_j$ together with the explicit Fourier series representation
\begin{align*}
    \sum_{j=1}^{n} \theta_\alpha \left(x - \frac{j}{n} + \varepsilon_j \right) =  \sum_{k \in \mathbb{Z}} e^{-\pi \alpha k^2} \left( \sum_{j=1}^{n}  e^{2 \pi i k \varepsilon_j } e^{-2 \pi i k  \frac{j}{n}} \right) e^{2 \pi i k x}.
\end{align*}
Since the problem is invariant under shifts, we can (and have to) assume that $\varepsilon_1 + \dots + \varepsilon_n = 0$ to eliminate the invariance of the problem under translation. The argument is then structured as follows.

\begin{enumerate}
    \setcounter{enumi}{4}
    
    \item  In \S \ref{sec:main_contribution} we show that the frequencies where $k$ is a multiple of $n$ are exactly the terms that contribute when the points are equispaced: among these frequencies only $k \in \left\{-n,0,n\right\}$ have a sizeable contribution, the rest is small. The equispaced points yield $n$ local minima and our goal is to show that at least one of these minima further decreases unless $\varepsilon_j = 0$ for all $1 \leq j \leq n$ (meaning the points are equispaced again).

    \item We consider the trigonometric polynomial $g_1(x)$ which is the restriction to the first $(n-1)/2$ frequencies. By a modified Poincar\'e inequality, we will prove in \S \ref{sec:trig_poly} that any such trigonometric polynomial assumes a small negative value at at least one of the points of the form $(k+1/2)/n$, for $0 \leq k \leq n-1$.  This negative contribution is going to make at least one of the minima much smaller. It remains to make sure that this cannot be counteracted by contributions coming from the other frequencies.
    
    \item  There are two remaining parts to analyze: $g_2(x)$ defined by restricting summation to the frequencies $n/2 \leq |k| \leq n-1$ and $h(x)$ for all the remaining frequencies. We will prove in \S \ref{sec:outline_g2_h} that $\|g_2\|_{L^{\infty}}, \|h\|_{L^{\infty}} \ll \|g_1\|_{L^2}$. Indeed, these terms are many orders of magnitude smaller.
    
    \item The main ingredient for showing the last step is a surprising appearance of the \textit{Discrete Fourier Transform} (see \S \ref{sec:DFT}) hidden in the Fourier coefficients: since the sum of the perturbations $\varepsilon_1 + \dots + \varepsilon_n = 0$, we can approximate the Fourier coefficients whenever $k$ is not a multiple of $n$, as
    \begin{align*}
        \sum_{j=1}^{n}  e^{2 \pi i k \varepsilon_j } e^{-2 \pi i k  \frac{j}{n} } &= 2\pi i k  \sum_{j=1}^{n}  \varepsilon_j  e^{-2 \pi i k  \frac{j}{n} } + \mathcal{O}\left(k^2 \sum_{j=1}^{n} \varepsilon_j^2 \right),
    \end{align*}
    where the sum is merely a Discrete Fourier Transform of the $\varepsilon_1, \dots, \varepsilon_n$. This allows us to deduce a certain type of symmetry (because the $\varepsilon_j$ are real-valued) which will be used to prove $\|g_2\|_{L^{\infty}} \ll \|g_1\|_{L^2}$. It also guarantees that not all Fourier coefficients are small (via a Plancherel identity).
    
    \item The final inequality, established in \S \ref{sec:final_estimate}, is, assuming the perturbations $\varepsilon_j$ are exponentially close to 0, that the minimum
    $$ Z =  \min_{0 \leq k \leq n-1} \sum_{j=1}^{n} \theta_\alpha \left( \frac{k+1/2}{n} - \frac{j}{n} + \varepsilon_j \right)$$
    satisfies 
    \begin{align*}
        Z  \leq   \left[\min_{0 \leq x \leq 1} \sum_{j=1}^{n} \theta_\alpha \left(x - \frac{j}{n}  \right) \right]  - C e^{-\pi \alpha \left( \frac{n-1}{2} \right)^2} \left(\sum_{j=1}^{n} \varepsilon_j^2 \right)^{1/2}
    \end{align*}
    which then forces all the perturbations to vanish.
\end{enumerate}

\section*{Part 1 of the proof}
\subsection{Minimizer for equidistributed points}\label{sec:minimizer}
We first prove that for equispaced points the minimum is attained exactly midway between two subsequent points. It is somewhat remarkable, and indicative of the difficulty of the problem, that even this very intuitive statement does not appear to have a very simple proof.

\begin{proposition}\label{pro:minimum_1/2} We have, for all $0 \leq \ell \leq n-1$
$$   \min_{x \in \mathbb{T}}\sum_{j=1}^n  \theta_{\alpha}\left(x - \frac{j-1}{n}\right) = \sum_{j=1}^n  \theta_{\alpha}\left(\frac{\ell+1/2}{n} - \frac{j-1}{n}\right).$$
\end{proposition}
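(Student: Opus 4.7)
The plan is to exploit a hidden $(1/n)$-periodicity in the sum and then reduce the proposition to a minimization statement about a single theta function. First I would observe that the set of points $\{(j-1)/n\}_{j=1}^n$ is invariant under the shift $x \mapsto x+1/n$, so the function $F(x) := \sum_{j=1}^n \theta_\alpha(x-(j-1)/n)$ is $(1/n)$-periodic. Concretely, expanding the Fourier series of each copy of $\theta_\alpha$ and swapping the order of summation, the inner sum $\sum_{j=1}^n e^{-2\pi i k(j-1)/n}$ equals $n$ if $n\mid k$ and vanishes otherwise, so only frequencies that are multiples of $n$ survive, giving
\begin{equation}
    F(x) = n \sum_{m\in\mathbb{Z}} e^{-\pi\alpha n^2 m^2}\, e^{2\pi i m(nx)} = n\,\theta_{\alpha n^2}(nx).
\end{equation}
It is therefore enough to show that for any parameter $\beta>0$ the function $\theta_\beta(y)$ attains its minimum on $\mathbb{T}$ precisely at $y=1/2$: then $F$ is minimized exactly when $nx\equiv 1/2\pmod 1$, i.e.\ at $x = (\ell+1/2)/n$ for $\ell=0,\dots,n-1$, and the common minimum value coincides at all these points by the $(1/n)$-periodicity.

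The key step is then the minimization of $\theta_\beta$. I would invoke the Jacobi triple product recalled in \S \ref{sec:notation}. Setting $\tau=i\beta$ and $z=y$, it gives
\begin{equation}
    \theta_\beta(y) = \prod_{k\geq 1} \bigl(1-e^{-2\pi\beta k}\bigr)\bigl(1 + 2 e^{-\pi\beta(2k-1)}\cos(2\pi y) + e^{-2\pi\beta(2k-1)}\bigr).
\end{equation}
The factors in the first parenthesis are positive constants independent of $y$. Each $y$-dependent factor can be rewritten as $\bigl|1+c_k e^{2\pi i y}\bigr|^2$ with $c_k := e^{-\pi\beta(2k-1)} \in (0,1)$, so it is strictly positive and attains its minimum $(1-c_k)^2$ precisely when $e^{2\pi i y}=-1$, i.e.\ at $y=1/2$ (mod $1$). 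Being a convergent product of strictly positive functions each minimized at the same point, $\theta_\beta$ is itself minimized precisely at $y=1/2$, which is exactly what is needed.

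The main (mild) obstacle is really just locating the right representation: the direct cosine series $\theta_\beta(y) = 1 + 2\sum_{k\geq 1} e^{-\pi\beta k^2}\cos(2\pi k y)$ does \emph{not} have alternating signs factor-by-factor at $y=1/2$ in a way that is trivially monotone in partial sums, and a naive termwise estimate is too lossy to pin down the minimizer. The Jacobi triple product is the clean way to convert the claim into a pointwise inequality applied factor by factor, and it also makes the uniqueness of the minimizer (on one fundamental domain of the enlarged $(1/n)$-period) transparent.
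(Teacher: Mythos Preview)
Your proof is correct and follows essentially the same route as the paper: both reduce the sum via the Fourier expansion to $n\,\theta_{\alpha n^2}(nx)$ and then apply the Jacobi triple product to locate the minimum of a single theta function at $y=1/2$. Your rewriting of each $y$-dependent factor as $|1+c_k e^{2\pi i y}|^2$ is a slightly cleaner way to see positivity and the minimizer simultaneously, but the substance is identical.
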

\begin{proof}
    Suppose $\left\{x_1, \dots, x_n\right\} \subset [0,1)$ are equispaced points, $x_j = (j-1)/n$. Then
    $$ \sum_{j=1}^n  \theta_{\alpha}\left(x - x_j\right) = \sum_{j=1}^n \sum_{k \in \Z} e^{-\pi \alpha k^2} e^{2 \pi i k (x-x_j)} = \sum_{k \in \mathbb{Z}} e^{-\pi \alpha k^2} \left( \sum_{j=1}^n e^{-2 \pi i k x_j} \right) e^{2 \pi i k x}.$$
    As the points are equispaced, we have
    $$  \sum_{j=1}^n e^{-2 \pi i k x_j}  = \begin{cases} n \qquad &\mbox{whenever}~n \big| k  \\ 0 \qquad &\mbox{otherwise.} \end{cases}$$
    Therefore
    \begin{equation}\label{eq:theta_min_periodic}
        \sum_{j=1}^n  \theta_{\alpha}\left(x - \frac{j-1}{n}\right) = n\sum_{k \in \mathbb{Z}} e^{-\pi \alpha k^2 n^2}  e^{2 \pi i k n x} = n \cdot \theta (nx; n^2 \alpha).
    \end{equation}
    We use the Jacobi triple product representation of the theta function
    \begin{equation}\label{eq:triple_product}
	\theta(x;\alpha) = \prod_{k \geq 1} \left(1 - e^{-2 k \pi \alpha} \right) \left(1 + 2 \cos(2 \pi x) e^{-(2k-1)\pi \alpha} + e^{-2 (2k-1) \pi \alpha} \right).
    \end{equation}
Only now it is easy to find the minimum: in the product formula of $\theta$ each factor is minimized if and only if $x \in \Z + 1/2$, as the cosine-term is decisive and assumes its minimum there.
The following inequality is an immediate consequence:
\begin{equation}
	n \cdot \theta\left(\frac{1}{2};n^2\alpha\right) \leq n \cdot \theta(n x; n^2 \alpha), \quad \forall \alpha > 0,
\end{equation}
where equality holds if and only if $x \in \frac{1}{n} \left(\Z + \frac{1}{2}\right)$. The result follows from \eqref{eq:theta_min_periodic}.
\end{proof}

\begin{figure}[ht]
    \includegraphics[width=.95\textwidth]{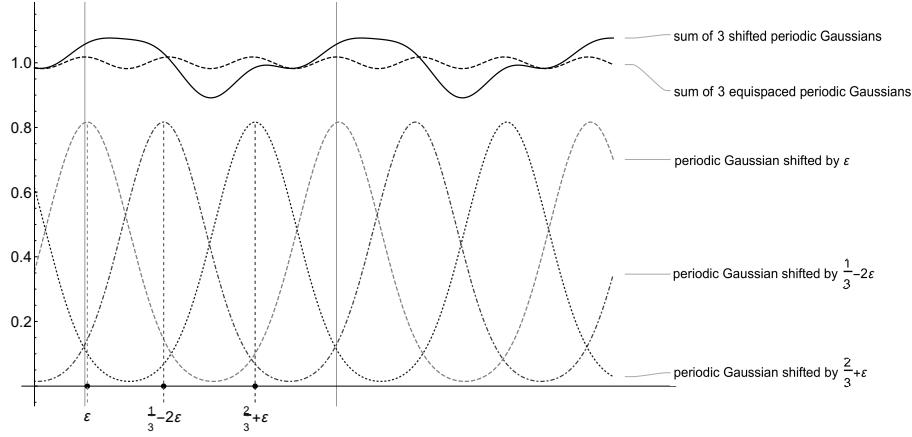}
    \captionsetup{width=.975\linewidth}
    \caption{For the sum of equispaced periodic Gaussians the minimum is achieved midway between successive shifts. For sums of shifts by a general periodic configurations it is rather difficult to grasp the minimum. For the plot we have normalized the sum to oscillate around 1, i.e., the integral over 1 period is 1.}
    \label{fig:Gaussians}
\end{figure}

This fact will be used frequently since it allows for the natural point of comparison (see Figure \ref{fig:Gaussians}). The next step consists in computing the actual size of the minimum. Using, again, the fact that unit roots sum to 0 we end up with
\begin{align*}
     \sum_{j=1}^n  \theta_{\alpha}\left(x - \frac{j}{n}\right) &= n\sum_{k \in \mathbb{Z}} e^{-\pi \alpha k^2 n^2}  e^{2 \pi i k n x} \\
     &=n + 2 n e^{-\pi \alpha n^2} \cos\left( 2 \pi  n x \right)  + \mathcal{O}(n e^{-4 \pi \alpha n^2}).
\end{align*}
Since we know from Proposition \ref{pro:minimum_1/2} that the minimum is attained exactly in the middle between two subsequent points, we have the explicit representation
\begin{align*}
   \min_x \sum_{j=1}^n  \theta_{\alpha}\left(x - \frac{j-1}{n}\right) &= \sum_{j=1}^n  \theta_{\alpha}\left(\frac{1}{2n} - \frac{j-1}{n}\right) \\
   &= n\sum_{\ell \in \mathbb{Z}} e^{-\pi \alpha \ell^2 n^2}  e^{2 \pi i \ell n \frac{1}{2n}} = n + 2n \sum_{\ell=1}^{\infty} (-1)^{\ell} e^{-\pi \alpha \ell^2 n^2} \\
   &= n - 2 n e^{-\pi \alpha n^2}  + \mathcal{O}(n e^{-4 \pi \alpha n^2}).
\end{align*}

\subsection{\texorpdfstring{$L^1$}{L1}-estimates}\label{sec:L1-estimate}
We continue with a basic $L^1$-estimate and a not so basic $L^1$-estimate. The reason why $L^1$ is a natural space to bound deviation from the mean is given by the following elementary pigeonhole argument.

\begin{lemma} \label{lem:basic}
Suppose $g:[0,1] \rightarrow \mathbb{R}$ is a periodic, continuous function with mean value 0. Then
$$ \min_{0 \leq x \leq 1} g(x) \leq  - \frac{\|g\|_{L^1}}{2}.$$
\end{lemma}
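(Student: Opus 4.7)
The plan is to use a split of $g$ into its positive and negative parts and exploit the mean-zero hypothesis. Specifically, I would write $g = g_+ - g_-$ where $g_+ = \max(g,0) \geq 0$ and $g_- = \max(-g,0) \geq 0$, so that $|g| = g_+ + g_-$. Pointwise these are nonnegative continuous functions with disjoint supports (up to the zero set of $g$).

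The first key observation is that the mean-zero condition $\int_0^1 g \, dx = 0$ forces $\int_0^1 g_+ \, dx = \int_0^1 g_- \, dx$. Combined with $\|g\|_{L^1} = \int_0^1 g_+ \, dx + \int_0^1 g_- \, dx$, this gives the identity
\begin{equation}
    \int_0^1 g_-(x) \, dx = \frac{\|g\|_{L^1}}{2}.
\end{equation}

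The second ingredient is a trivial mean-versus-maximum comparison for the nonnegative function $g_-$ on the unit interval: $\max_{x} g_-(x) \geq \int_0^1 g_-(x)\,dx$, since the integral is an average over an interval of length one.

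Combining these, since $\min_x g(x) = -\max_x g_-(x)$ (here one just needs that $g_-$ attains its maximum wherever $g$ attains its minimum, which follows from continuity and the definitions), we get
\begin{equation}
    \min_{0 \leq x \leq 1} g(x) = -\max_{x} g_-(x) \leq -\int_0^1 g_-(x)\,dx = -\frac{\|g\|_{L^1}}{2},
\end{equation}
which is exactly the claim. There is no real obstacle here — the statement is genuinely a pigeonhole observation — but the only small point worth stating carefully is that $\min g$ is negative (or zero) whenever $g$ has mean zero and is continuous, so the identification $\min_x g(x) = -\max_x g_-(x)$ is valid; the degenerate case $g \equiv 0$ makes the inequality trivially an equality.
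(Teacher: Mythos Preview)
Your proof is correct and is essentially the same argument as the paper's: the paper writes $\int_0^1 \min(0,g)\,dx = -\|g\|_{L^1}/2$ and bounds this below by $\min_x g(x)$, which is exactly your decomposition $g = g_+ - g_-$ together with $\int_0^1 g_- = \|g\|_{L^1}/2$ and $\max_x g_- \geq \int_0^1 g_-$, just in slightly different notation.
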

\begin{proof}
Since $g$ has mean value 0, we have 
$$ \int_0^1 \max(0,g(x)) dx =  - \int_0^1 \min(0, g(x)) dx$$
and
thus
$$ \int_0^1 \min(0,g(x)) dx = - \frac{\|g\|_{L^1}}{2}.$$
The argument then follows from
$$ - \frac{\|g\|_{L^1}}{2} = \int_0^1 \min(0,g(x)) dx \geq \min_{0 \leq x \leq 1} g(x). $$
\end{proof}

We also use an inequality discovered independently by McGehee, Pigno, and Smith \cite{mcgehee} and Konyagin \cite{Kon82}. It arose in their solutions of the Littlewood conjecture. 

\begin{thm}[McGehee, Pigno, Smith \cite{mcgehee}]
    For any set of integers $\lambda_1 < \lambda_2 < \dots < \lambda_n$ we have
    $$ \int_0^{1} \left| \sum_{j=1}^{n} a_j e^{ 2\pi i \lambda_j t} \right| dt \geq \frac{1}{200} \sum_{j=1}^{n} \frac{|a_j|}{j}.$$
\end{thm}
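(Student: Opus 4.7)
The plan is to invoke $L^1$-$L^\infty$ duality and reduce the inequality to a dual construction problem. Specifically, write $u_j = \overline{a_j}/|a_j|$ (well-defined assuming $a_j \neq 0$, which we may arrange by dropping zero terms). If we can exhibit a function $\phi \in L^\infty([0,1])$ with $\|\phi\|_\infty \leq C$ whose Fourier coefficients at the prescribed frequencies satisfy $\widehat{\phi}(\lambda_j) = u_j/j$ (and are otherwise unrestricted), then pairing against $f = \sum_j a_j e^{2\pi i \lambda_j t}$ gives
\[
\int_0^1 f(t)\,\overline{\phi(t)}\,dt \;=\; \sum_{j=1}^n a_j \cdot \frac{\overline{u_j}}{j} \;=\; \sum_{j=1}^n \frac{|a_j|}{j},
\]
and hence $\|f\|_{L^1} \geq (1/C) \sum_j |a_j|/j$. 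Thus the entire problem collapses to constructing such a $\phi$ with $C$ a universal constant ($200$ in the target).

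The naive choice $\phi(t)=\sum_j (u_j/j)\,e^{-2\pi i \lambda_j t}$ is useless: its $L^\infty$-norm can grow like $\log n$. The key McGehee--Pigno--Smith idea is to \emph{enlarge} the Fourier support of $\phi$ outside $\{\lambda_1,\dots,\lambda_n\}$, inserting carefully chosen auxiliary Fourier modes that absorb the destructive accumulation while preserving the prescribed coefficients at each $\lambda_j$. Concretely, I would partition the indices into dyadic blocks $B_k = \{j:\,2^{k-1}\leq j<2^k\}$, and for each block build an auxiliary trigonometric polynomial $\phi_k$ with $\widehat{\phi_k}(\lambda_j)=u_j/j$ for $j\in B_k$ and $\widehat{\phi_k}(\lambda_i)=0$ for $i \notin B_k$, via a Riesz-product/Rudin--Shapiro-type factorization. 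The scaling $1/j \asymp 2^{-k}$ inside $B_k$ allows one to obtain $\|\phi_k\|_\infty \leq C_0 \, 2^{-k/2}$ (say), summable over $k$, and then setting $\phi=\sum_k \phi_k$ yields a bounded total after $O(\log n)$ blocks.

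The main obstacle is ensuring that, after these inter-block additions, the Fourier coefficients at the prescribed frequencies $\lambda_j$ remain exactly $u_j/j$ and the $L^\infty$-norms really do telescope into a universal constant. This is the technical heart of the argument: it forces one to choose the auxiliary support frequencies of $\phi_k$ well separated from all $\lambda_j$ in neighbouring blocks (so cross-block contamination vanishes), and to iterate a pairwise cancellation argument that swaps Fourier mass between a block and its auxiliary modes without violating the prescribed values. The combinatorics of this swapping step--together with verifying that the sums of block $L^\infty$-norms stay absolutely bounded independently of $n$--is where the constant $1/200$ is eventually pinned down, and where Konyagin's independent argument proceeds via a related but slightly different averaging construction. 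Once the construction is in hand, the inequality follows by a one-line duality pairing as above.
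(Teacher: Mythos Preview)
The paper does not actually prove this theorem: it is quoted from \cite{mcgehee} and used as a black box in Lemma~\ref{lem:McGPS_estimate}. The paper even remarks that for its purposes the result can be replaced by the elementary estimate $\int_0^1 |\sum_j a_j e^{2\pi i \lambda_j t}|\,dt \geq \frac{1}{n}\sum_j |a_j|$, obtained by testing against $e^{-2\pi i \lambda_k t}$ for each $k$. So there is no ``paper's proof'' to compare against.

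That said, your duality framework is the correct starting point and is indeed how McGehee--Pigno--Smith proceed: one constructs $\phi\in L^\infty$ with $\widehat{\phi}(\lambda_j)=\overline{a_j}/(j|a_j|)$ and $\|\phi\|_\infty\leq C$, then pairs. The gap in your sketch is the construction of the block pieces $\phi_k$. A Rudin--Shapiro or Riesz-product mechanism does not let you prescribe arbitrary unimodular phases at an arbitrary set of frequencies, nor does it give you control over where the auxiliary Fourier mass lands; your claim that one can choose auxiliary frequencies ``well separated from all $\lambda_j$'' fails in general because the $\lambda_j$ are arbitrary integers and may be consecutive. More seriously, your outline never invokes the \emph{ordering} $\lambda_1<\lambda_2<\dots<\lambda_n$, which is what makes the right-hand side $\sum_j |a_j|/j$ (rather than any permuted version) appear. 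The actual McGehee--Pigno--Smith construction exploits this ordering essentially: the dyadic blocks have frequency supports that are \emph{ordered on the line}, which permits a one-sided (Hardy-space type) recursive construction of the form $\phi = \sum_k Q_1 Q_2 \cdots Q_{k-1}\,h_k$, where the $Q_i$ are damping factors built from the earlier blocks and the product structure interacts correctly with the frequency ordering so that $\widehat{\phi}(\lambda_j)$ is not contaminated by later blocks. Without this causal/ordered structure, the cross-block contamination you flag cannot be controlled, and the argument as written does not close.
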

We note that Konyagin \cite{Kon82} did not explicitly provide the constant. McGehee, Pigno, and Smith work over the interval $[0,2\pi]$ and show that the inequality holds with constant $c = 1/30$ which leads to $1/(60 \pi) \geq 1/200$ being an admissible constant when working over the interval $[0,1]$.  Stegeman \cite{steg} showed that one can take $c = 4/\pi^3$ on $[0,2\pi]$ which would lead to a constant of $1/50$ being admissible after rescaling to $[0,1]$. In any case, the precise value of the constant will not be of importance for the subsequent argument.
We will use the McGehee--Pigno--Smith inequality to derive a lower bound on the $L^1$-norm of the deviation of the sum of Jacobi $\theta$-functions from their mean. We note that if the lower bound is large, then the $L^1$-norm is large and, as a consequence, the minimal value attained by the function has to be quite a bit smaller than its average. Since we want to avoid this, this will implicitly force the first few Fourier coefficients to be small. It has been pointed out by an anonymous referee that, for the purposes of our argument, the McGehee--Pigno--Smith inequality can be avoided as follows: we have, for any $1 \leq k \leq n$ that
\begin{align*}
      \int_0^{1} \left| \sum_{j=1}^{n} a_j e^{ 2\pi i \lambda_j t} \right| dt &= 
      \int_0^{1} \left| e^{-2 \pi i\lambda_k t} \sum_{j=1}^{n} a_j e^{ 2\pi i \lambda_j t} \right| dt = \int_0^{1} \left|  \sum_{j=1}^{n} a_j e^{ 2\pi i (\lambda_j - \lambda_k) t} \right| dt \\
      &\geq   \left| \int_0^{1}  \sum_{j=1}^{n} a_j e^{ 2\pi i (\lambda_j - \lambda_k) t} dt \right| = |a_k|
\end{align*}
and therefore also
$$  \int_0^{1} \left| \sum_{j=1}^{n} a_j e^{ 2\pi i \lambda_j t} \right| dt \geq \frac{1}{n} \sum_{j=1}^{n} |a_j|.$$
This estimate is indeed sufficient for the remainder of the argument. This is partially due to the fact that the multipliers in the Fourier series decay extremely rapidly (i.e. like a Gaussian). Using the McGehee--Pigno--Smith inequality instead of the more elementary inequality might prove advantageous when trying to establish an analogous result with a kernel whose Fourier transform decays more slowly. Using the McGehee--Pigno--Smith or the more elementary inequality gives the following.

\begin{lemma}\label{lem:McGPS_estimate}
    We have, for all $\left\{x_1, \dots, x_n \right\} \subset [0,1)$ that
    $$ \left\|  \sum_{j=1}^n  \theta_{\alpha}\left(x - x_j\right)  - n\right\|_{L^1} \geq \frac{1}{400n}  \sum_{\substack{k \neq 0 \\ |k| \leq n}}  e^{-\pi \alpha k^2} \left| \sum_{j=1}^{n} e^{-2 \pi i k x_j} \right| - \mathcal{O}(n  e^{-\pi \alpha (n+1)^2}).$$
\end{lemma}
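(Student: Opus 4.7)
The plan is to represent $\sum_{j=1}^n \theta_\alpha(x-x_j) - n$ as an explicit Fourier series, peel off the high-frequency tail into the error term, and apply the McGehee--Pigno--Smith inequality (or the elementary substitute mentioned above) to what remains. Writing $\hat{\mu}(k) = \sum_{j=1}^n e^{-2\pi i k x_j}$ and using the Fourier expansion of $\theta_\alpha$, the $k=0$ term contributes exactly $n$, so
\begin{equation}
    \sum_{j=1}^n \theta_\alpha(x-x_j) - n = \sum_{k \neq 0} e^{-\pi \alpha k^2}\, \hat{\mu}(k)\, e^{2\pi i k x} =: F(x) + R(x),
\end{equation}
where $F$ collects the frequencies with $1 \leq |k| \leq n$ and $R$ contains the tail $|k| > n$.

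For the tail, I would use the crude bound $|\hat{\mu}(k)| \leq n$ together with the Gaussian decay to estimate
\begin{equation}
    \|R\|_{L^1} \;\leq\; \|R\|_{L^\infty} \;\leq\; n \sum_{|k| > n} e^{-\pi\alpha k^2} \;=\; \mathcal{O}\bigl(n\, e^{-\pi \alpha (n+1)^2}\bigr),
\end{equation}
where the sum is controlled by comparing it to a geometric series with ratio $e^{-\pi\alpha(2n+3)}$. This will be the quantity subtracted off on the right-hand side of the lemma.

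The main step is to bound $\|F\|_{L^1}$ from below. The polynomial $F$ has exactly $2n$ nonzero frequencies (namely $k = \pm 1, \pm 2, \ldots, \pm n$). Applying the McGehee--Pigno--Smith inequality to these $2n$ terms (ordered arbitrarily) and using the trivial bound $j \leq 2n$ in the denominator gives
\begin{equation}
    \|F\|_{L^1} \;\geq\; \frac{1}{200} \sum_{j=1}^{2n} \frac{|a_j|}{j} \;\geq\; \frac{1}{400\, n} \sum_{1 \leq |k| \leq n} e^{-\pi\alpha k^2}\, |\hat{\mu}(k)|.
\end{equation}
Alternatively, one gets the same conclusion (up to the absolute constant) from the elementary inequality $\int_0^1 |\sum_j a_j e^{2\pi i \lambda_j t}|\,dt \geq \frac{1}{2n}\sum_j |a_j|$ noted in the excerpt.

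Finally, the triangle inequality $\|F\|_{L^1} \leq \|F+R\|_{L^1} + \|R\|_{L^1}$ combined with the two bounds above yields precisely the claimed estimate. The only point that requires a little care is making sure the tail constant from the Gaussian sum is indeed absorbed by the explicit $\mathcal{O}(n\, e^{-\pi\alpha(n+1)^2})$ error term; since $\sum_{|k|>n} e^{-\pi\alpha k^2} \leq C_\alpha e^{-\pi\alpha(n+1)^2}$ with $C_\alpha$ depending only on $\alpha$, this is routine. I do not expect any genuine obstacle here — the lemma is essentially a packaging of Fourier expansion, Gaussian tail estimate, and the $L^1$ lower bound for trigonometric polynomials.
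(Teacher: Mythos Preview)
Your proposal is correct and follows essentially the same route as the paper: expand $\sum_j \theta_\alpha(x-x_j)-n$ as a Fourier series, truncate at $|k|\le n$ with an $L^\infty$ tail bound of order $n\,e^{-\pi\alpha(n+1)^2}$, apply McGehee--Pigno--Smith to the remaining $2n$ frequencies (using $j\le 2n$ to extract the factor $1/(400n)$), and combine via the triangle inequality. The only cosmetic difference is that the paper phrases the truncation as approximating the full sum by a trigonometric polynomial in $L^\infty$ before applying the $L^1$ lower bound, whereas you write it as $F+R$ and peel off $R$ afterwards; the arithmetic is identical.
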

\begin{proof}
Our object of interest
$$   \sum_{j=1}^n  \theta_{\alpha}\left(x - x_j\right) = \sum_{k \in \mathbb{Z}} e^{-\pi \alpha k^2} \left( \sum_{j=1}^{n} e^{-2 \pi i k x_j} \right) e^{2 \pi i k x}$$
is not quite of the required form since it is not a trigonometric polynomial. However, a simple application of the triangle inequality leads to
$$ \left\| \sum_{j=1}^n  \theta_{\alpha}\left(x - x_j\right) -  \sum_{|k| \leq n} e^{-\pi \alpha k^2} \left( \sum_{j=1}^{n} e^{-2 \pi i k x_j} \right) e^{2 \pi i k x} \right\|_{L^{\infty}} \lesssim  n  e^{-\pi \alpha (n+1)^2}.$$
We apply the McGehee--Pigno--Smith inequality to the trigonometric polynomial
$$  \left\|\sum_{|k| \leq n} e^{-\pi \alpha k^2} \left( \sum_{j=1}^{n} e^{-2 \pi i k x_j} \right) e^{2 \pi i k x}  - n \right\|_{L^1} \geq \frac{1}{400n}  \sum_{\substack{k \neq 0 \\ |k| \leq n}}  e^{-\pi \alpha k^2} \left| \sum_{j=1}^{n} e^{-2 \pi i k x_j} \right|.$$
Combined with the truncation error, this leads to the lower bound
$$ \left\|  \sum_{j=1}^n  \theta_{\alpha}\left(x - x_j\right)  - n\right\|_{L^1} \geq \frac{1}{400n}  \sum_{\substack{k \neq 0 \\ |k| \leq n}}  e^{-\pi \alpha k^2} \left| \sum_{j=1}^{n} e^{-2 \pi i k x_j} \right| - \mathcal{O}(n  e^{-\pi \alpha (n+1)^2}).$$
\end{proof}

\subsection{The first \texorpdfstring{$n-1$}{n-1} Fourier coefficients are small.}\label{sec:small_Fourier_coeff}
The purpose of this section is to show that the first $n-1$ Fourier coefficients of any minimizing configuration are exponentially small in~$n$.
\begin{lemma}
    Let $\{x_1, \dots, x_n\} \subset [0,1)$ be a configuration of points that maximizes the minimum. Then
    $$ \max_{1 \leq |k| \leq n-1}  \left| \sum_{j=1}^{n} e^{-2 \pi i k x_j} \right| \leq 2000n^2 \cdot e^{-\pi \alpha (2n-1)}.$$
\end{lemma}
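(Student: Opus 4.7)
The plan is to chain together three ingredients already available in the paper: the explicit size of the minimum for equispaced points (computed in \S \ref{sec:minimizer}), the pigeonhole bound of Lemma \ref{lem:basic} on the $L^1$ norm of a mean-zero function, and the lower bound of Lemma \ref{lem:McGPS_estimate} on the $L^1$ norm of the deviation in terms of the Fourier coefficients $\widehat{\mu}(k) = \sum_{j=1}^n e^{-2\pi i k x_j}$ of the empirical measure $\mu = \sum_{j=1}^n \delta_{x_j}$.

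First, since $\{x_1,\dots,x_n\}$ maximizes the minimum and the equispaced configuration is a valid competitor, optimality gives
\begin{equation*}
\min_{x} \sum_{j=1}^n \theta_\alpha(x - x_j) \;\geq\; \min_{x} \sum_{j=1}^n \theta_\alpha\bigl(x - \tfrac{j-1}{n}\bigr) \;=\; n - 2n e^{-\pi \alpha n^2} + \mathcal{O}\bigl(n e^{-4\pi \alpha n^2}\bigr),
\end{equation*}
where the explicit right-hand side was established at the end of \S \ref{sec:minimizer}. Setting $g(x) = \sum_{j=1}^n \theta_\alpha(x-x_j) - n$, which has mean zero (since $\theta_\alpha$ has mean one), we conclude $\min_x g(x) \geq -2n e^{-\pi\alpha n^2} + \mathcal{O}(n e^{-4\pi\alpha n^2})$.

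Next, Lemma \ref{lem:basic} applied to $g$ yields $\|g\|_{L^1} \leq -2 \min_x g(x)$, so
\begin{equation*}
\|g\|_{L^1} \;\leq\; 4n\, e^{-\pi \alpha n^2} + \mathcal{O}\bigl(n e^{-4\pi\alpha n^2}\bigr).
\end{equation*}
Feeding this upper bound into the lower bound of Lemma \ref{lem:McGPS_estimate}, for every fixed $k_0$ with $1 \leq |k_0| \leq n-1$ I isolate the single $k=k_0$ term and discard the other nonnegative contributions:
\begin{equation*}
\tfrac{1}{400n}\, e^{-\pi\alpha k_0^2}\,\bigl|\widehat{\mu}(k_0)\bigr| \;\leq\; \|g\|_{L^1} + \mathcal{O}\bigl(n e^{-\pi\alpha(n+1)^2}\bigr) \;\leq\; 4n\, e^{-\pi\alpha n^2} + \mathcal{O}\bigl(n e^{-\pi\alpha(n+1)^2}\bigr).
\end{equation*}
Rearranging gives $|\widehat{\mu}(k_0)| \leq 1600\, n^2\, e^{-\pi\alpha(n^2 - k_0^2)} + \text{(lower-order terms)}$. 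The worst case occurs at $|k_0| = n-1$, where $n^2 - k_0^2 = 2n-1$, yielding the bound $1600\, n^2\, e^{-\pi\alpha(2n-1)}$ modulo error terms absorbed comfortably by the slightly larger constant $2000$ in the statement (and by taking $n \geq N(\alpha)$ if necessary).

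No step is really an obstacle here; this is essentially bookkeeping. The only mild subtlety is ensuring the $\mathcal{O}(n e^{-\pi\alpha(n+1)^2})$ truncation error from Lemma \ref{lem:McGPS_estimate} is dominated by the main term $4n e^{-\pi\alpha n^2}$ after being multiplied by $400n \cdot e^{\pi\alpha k_0^2}$, which holds for $|k_0| \leq n-1$ precisely because $(n+1)^2 - (n-1)^2 = 4n$ provides a comfortable Gaussian margin. The constant is then chosen generously ($2000$ rather than the $1600$ that the argument tightly produces) so that one does not need to track these secondary error terms carefully for $n$ large compared to a constant depending on $\alpha$.
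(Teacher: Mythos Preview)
Your proof is correct and follows essentially the same route as the paper: combine the optimality bound from the equispaced minimum in \S\ref{sec:minimizer}, the pigeonhole Lemma~\ref{lem:basic}, and the $L^1$ lower bound of Lemma~\ref{lem:McGPS_estimate}, then isolate a single frequency and observe that $|k|=n-1$ is the worst case. The only cosmetic difference is that the paper first merges Lemmas~\ref{lem:basic} and~\ref{lem:McGPS_estimate} into one inequality (yielding $\tfrac{1}{800n}$ against $2n e^{-\pi\alpha n^2}$) whereas you bound $\|g\|_{L^1}$ first (yielding $\tfrac{1}{400n}$ against $4n e^{-\pi\alpha n^2}$); the resulting constant $1600\,n^2$ and the final absorption into $2000\,n^2$ are identical.
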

\begin{proof}
Combining Lemma \ref{lem:basic} with Lemma \ref{lem:McGPS_estimate}, we deduce that for any set $\left\{x_1, \dots, x_n \right\} \subset [0,1]$ the function
$$ f(x) = \sum_{k \in \mathbb{Z}} e^{-\pi \alpha k^2} \left( \sum_{j=1}^{n} e^{-2 \pi i k x_j} \right) e^{2 \pi i k x}$$
satisfies the inequality
$$ \min_{0 \leq x \leq 1} f(x) - n \leq   -\frac{1}{800n}  \sum_{\substack{k \neq 0 \\ |k| \leq n}}  e^{-\pi \alpha k^2} \left| \sum_{j=1}^{n} e^{-2 \pi i k x_j} \right| + \mathcal{O}(n  e^{- \pi \alpha (n+1)^2}).$$
We know that equispaced points satisfy
$$ \min_{0 \leq x \leq 1} \sum_{j=1}^n \theta_\alpha(x-x_j) = n - 2 n e^{-\pi \alpha n^2}  + \mathcal{O}(n e^{-4 \pi \alpha n^2}).$$
Therefore, if we now assume that $\left\{x_1, \dots, x_n \right\} \subset [0,1)$ is a configuration maximizing the minimum, we have that
$$  \min_{0 \leq x \leq 1} \sum_{j=1}^n \theta_\alpha(x-x_j) \geq   n - 2 n e^{-\pi \alpha n^2}  + \mathcal{O}(n e^{-4 \pi \alpha n^2}).$$
which then implies
$$ \frac{1}{800n}  \sum_{\substack{k \neq 0 \\ |k| \leq n}}  e^{-\pi \alpha k^2} \left| \sum_{j=1}^{n} e^{-2 \pi i k x_j} \right| \leq 2 n e^{-\pi \alpha n^2} + \mathcal{O}(n  e^{-\pi \alpha (n+1)^2}).$$
This implies that for $1 \leq |k| \leq n$ and $n$ sufficiently large (depending only on $\alpha$)
$$ \left| \sum_{j=1}^{n} e^{-2 \pi i k x_j} \right| \leq 2000 n^2 \cdot e^{-\pi \alpha (n^2 - k^2)}.$$
This allows us to conclude that the first $n-1$ Fourier coefficients of the measure given by the sum of the $n$ Dirac measures in $x_1, \dots, x_n$ is exponentially small
\begin{equation}\label{eq:Fourier_coeff_small}
    \max_{1 \leq |k| \leq n-1}  \left| \sum_{j=1}^{n} e^{-2 \pi i k x_j} \right| \leq 2000n^2 \cdot e^{-\pi \alpha (2n-1)}.
\end{equation}
\end{proof}

\textit{Remark.} We note that the proof actually shows quite a bit more since the last step of the argument is only sharp when $k=n-1$. We note the stronger inequality
$$ \left| \sum_{j=1}^{n} e^{-2 \pi i k x_j} \right| \leq 2000 n^2 \cdot e^{-\pi \alpha (n^2 - k^2)}$$
but this will not strictly be required in the remainder of the argument.

\subsection{The gaps are regular}\label{sec:regular_gaps}
If we have $n$ equispaced points, then the first $n-1$ Fourier coefficients vanish. We prove a stability version of this statement: if the first $n-1$ Fourier coefficients are small, the points are almost equispaced.
\begin{lemma}\label{lem:regular_gaps}
    Suppose $\left\{x_1, \dots, x_n \right\} \subset [0,1)$ has the property that
     $$ \max_{1 \leq |k| \leq n-1}  \left| \sum_{j=1}^{n} e^{-2 \pi i k x_j} \right| \leq \varepsilon.$$
    Then, for $\varepsilon > 0$ sufficiently small (say $\varepsilon \leq 1/(1000n^4)$), there exists a permutation $\pi:S_n \rightarrow S_n$ and a global shift $z \in [0,1]$ such that
     $$ \left| x_{\pi(j)} - \frac{j}{n} - z \right| \leq \varepsilon.$$
\end{lemma}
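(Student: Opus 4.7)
The plan is to exploit the algebraic correspondence between $n$ points on the circle and the roots of a degree-$n$ monic polynomial, using the smallness of low-order Fourier coefficients to force this polynomial to be a small perturbation of $\zeta^n + c$, whose roots are equispaced on the unit circle. Set $\zeta_j = e^{2 \pi i x_j}$ and consider
$$ Q(\zeta) = \prod_{j=1}^n (\zeta - \zeta_j) = \zeta^n + \sum_{k=1}^n (-1)^k e_k \zeta^{n-k}, $$
where $e_k = e_k(\zeta_1, \ldots, \zeta_n)$ is the $k$-th elementary symmetric polynomial. The hypothesis reads exactly as $|p_k| \leq \varepsilon$ for $1 \leq k \leq n-1$, where $p_k = \sum_j \zeta_j^k$ are the power sums (using $p_{-k} = \overline{p_k}$).

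The first step is to pass from small power sums to small elementary symmetric polynomials via Newton's identities, $k e_k = \sum_{i=0}^{k-1} (-1)^{k-1-i} e_i p_{k-i}$ with $e_0 = 1$. An induction on $k$ yields $|e_k| \leq \varepsilon$ for $1 \leq k \leq n-1$: assuming this for $i < k$,
$$ k |e_k| \leq |p_k| + \sum_{i=1}^{k-1} |e_i| |p_{k-i}| \leq \varepsilon + (k-1)\varepsilon^2 \leq k\varepsilon $$
since $\varepsilon \leq 1$. Note that $|e_n| = \prod_j |\zeta_j| = 1$, so the auxiliary polynomial $Q_0(\zeta) = \zeta^n + (-1)^n e_n$ has $n$ equispaced roots $w_0, \ldots, w_{n-1}$ on the unit circle, corresponding precisely to an equispaced configuration $\{k/n + z\}_{k=0}^{n-1}$ for an appropriate shift $z \in [0,1)$.

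The core of the argument is a quantitative Rouch\'e comparison. Write $Q = Q_0 + r$ with $r(\zeta) = \sum_{k=1}^{n-1} (-1)^k e_k \zeta^{n-k}$. On the contour $|\zeta - w_k| = \rho$ with $\rho \leq 1/(2n)$, the factorization $\zeta^n - w_k^n = (\zeta - w_k) \sum_{j=0}^{n-1} \zeta^j w_k^{n-1-j}$ together with $|\zeta| \leq 1+\rho$ yields $|Q_0(\zeta)| \geq n\rho/2$, whereas $|r(\zeta)| \leq (n-1)\varepsilon (1+\rho)^{n-1}$. Choosing $\rho$ to be a small absolute multiple of $\varepsilon$, which the hypothesis $\varepsilon \leq 1/(1000 n^4)$ makes legal and keeps $(1+\rho)^{n-1}$ uniformly bounded, gives $|Q_0| > |r|$ on each contour, so Rouch\'e's theorem produces a unique root $\zeta_{\pi(k)}$ of $Q$ in each disk and yields a permutation $\pi$ with $|\zeta_{\pi(k)} - w_k| \leq C \varepsilon$ for an absolute constant $C$.

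The final step is purely metric: using $|e^{2\pi i a} - e^{2\pi i b}| \geq 4|a-b|$ for $|a-b| \leq 1/2$, one concludes $|x_{\pi(k)} - (k/n + z)| \leq C\varepsilon/4$, and a careful tracking of the constants (which the generous smallness of $\varepsilon$ accommodates) delivers the stated bound $\leq \varepsilon$. The main obstacle is the delicate balancing in the Rouch\'e step: the radius $\rho$ must simultaneously be small enough that $(1+\rho)^{n-1}$ remains tame and large enough that $n\rho/2$ dominates $|r(\zeta)|$. The extreme smallness of $\varepsilon$ permitted by hypothesis is exactly what provides the slack to close this balance cleanly and obtain an absolute constant rather than an $n$-dependent one.
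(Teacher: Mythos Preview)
Your argument is correct and takes a genuinely different route from the paper. The paper works on the Fourier side throughout: it pairs the empirical measure against the Fej\'er kernel $F_n$, observes that $\sum_{i,j} F_n(x_i-x_j) \leq n^2 + 2n\varepsilon^2$, and then uses the explicit quadratic behaviour of $F_n$ near its zeros $k/n$ to force every difference $x_i-x_j$ to lie within $\varepsilon$ of some $k/n$, after which a pigeonhole argument upgrades this to a global equispacing. Your approach instead passes to the complex plane: Newton's identities convert the bounds $|p_k|\leq\varepsilon$ into $|e_k|\leq\varepsilon$ for $1\leq k\leq n-1$, so the node polynomial $Q(\zeta)=\prod_j(\zeta-\zeta_j)$ is a coefficientwise $\varepsilon$-perturbation of $Q_0(\zeta)=\zeta^n+(-1)^n e_n$, and a Rouch\'e comparison on disks of radius $\sim\varepsilon$ about the equispaced roots of $Q_0$ pins down each $\zeta_j$.

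Each approach has its merits. Yours is arguably cleaner in that it produces the global shift $z$ directly (as the argument of $e_n$) and gives a tight constant in one pass, whereas the paper first controls pairwise differences and then assembles them; the paper also leaves the third-derivative bound on $F_n$ as a sketch. On the other hand, the Fej\'er-kernel argument stays entirely real and Fourier-analytic, which matches the toolkit of the rest of the paper. One small expository point: the sentence ``together with $|\zeta|\leq 1+\rho$ yields $|Q_0(\zeta)|\geq n\rho/2$'' needs a line of justification, since an upper bound on $|\zeta|$ does not by itself lower-bound $\bigl|\sum_{j}\zeta^j w_k^{n-1-j}\bigr|$; one should note that this sum equals $nw_k^{n-1}$ at $\zeta=w_k$ and deviates by at most $\rho\cdot\tfrac{n(n-1)}{2}(1+\rho)^{n-2}\leq n/2$ on the contour when $\rho\leq 1/(2n)$. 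With that filled in (and $\rho=4\varepsilon$, which the hypothesis $\varepsilon\leq 1/(1000n^4)$ comfortably allows), the chain $|\zeta_{\pi(k)}-w_k|\leq 4\varepsilon$ followed by $|e^{2\pi i a}-e^{2\pi i b}|\geq 4|a-b|$ delivers exactly the stated bound.
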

\begin{proof}
We use the Fej\'er kernel
$$ F_n(x) = \sum_{|k| \leq n} \left(1 - \frac{|k|}{n}\right) e^{2\pi i k x} = \frac{1}{n} \left(\frac{\sin{(\pi n x)}}{\sin{( \pi x)}} \right)^2 \geq 0.$$
Note that  $F_n(0) = n$. Therefore
\begin{align*}
    \sum_{i,j=1}^{n} F_n(x_i - x_j)
    & = \sum_{i,j=1}^{n} \sum_{|k| \leq n} \left(1 - \frac{|k|}{n}\right) e^{2\pi i k (x_i- x_j)} \\
    & = \sum_{|k| \leq n} \left(1 - \frac{|k|}{n}\right) \sum_{i,j=1}^{n}  e^{2\pi i k (x_i- x_j)} \\
    & = \sum_{|k| \leq n} \left(1 - \frac{|k|}{n}\right) \left| \sum_{j=1}^{n} e^{2\pi i k x_j} \right|^2.
\end{align*}

Hence, applying the assumption of the first $n-1$ non-zero Fourier coefficients being small, we get
\begin{align*}
    \sum_{i,j=1}^{n} F_n(x_i - x_j) 
    &= n^2 +  \sum_{\substack{i, j=1 \\ i \neq j}}^{n} F_n(x_i - x_j) = \sum_{|k| \leq n} \left(1 - \frac{|k|}{n}\right) \left| \sum_{j=1}^{n} e^{2\pi i k x_j} \right|^2 \\
    &= n^2 +  \sum_{\substack{ |k| \leq n \\ k \neq 0}} \left(1 - \frac{|k|}{n}\right) \left| \sum_{j=1}^{n} e^{2\pi i k x_j} \right|^2 \leq n^2 + 2n \varepsilon^2.
\end{align*}
From the above calculation we also conclude that, for any index $i \neq j$,
$$ F_n(x_i - x_j) \leq  \sum_{\substack{i, j=1 \\ i \neq j}}^{n} F_n(x_i - x_j) \leq 2n \varepsilon^2.$$
This inequality, by itself, is not tremendously powerful: we bound a term by a sum containing $\sim n^2$ similar terms. However, we have the luxury that we will only apply the Lemma in a regime where $\varepsilon$ is already exponentially small in $n$ which allows for losses at a polynomial scale. The roots of $F_n$ on $[0,1)$ are exactly the points of the form $k/n$ for $1 \leq k \leq n-1$. 
Since
$$ F_n(x) = \frac{1}{n} \left(\frac{\sin{(\pi n x)}}{\sin{( \pi x)}} \right)^2$$
we have
$$ \frac{d^2}{dx^2} F_n(x) = \frac{2 \pi ^2 \csc ^2(\pi  x)}{n} \, X$$
where
$$ X =\left(n^2 \cos ^2(\pi  n x)-\sin ^2(\pi  n x) \left(n^2-3
   \csc ^2(\pi  x)+2\right)-2 n \cot (\pi  x) \sin (2 \pi  n x)\right).$$
   At points of the form $x = k/n$ this expression simplifies to
$$ \frac{d^2}{dx^2} F_n(x) \Big|_{x = \frac{k}{n}} = 2 \pi ^2 n \csc ^2\left(\frac{k \pi }{n}\right) \geq 2 \pi^2 n.$$
Therefore, for $y$ sufficiently close to 0, we have
$$ F_n\left( \frac{k}{n} + y\right) \geq 2n y^2.$$
A similar argument can be used to give an upper bound on the third derivative. The Taylor formula with remainder shows that the inequality is valid for $y$ in a region around 0 that shrinks polynomially in $n$ and from this we deduce the validity of the inequality for $\varepsilon$ sufficiently small. The previous inequality
$$ F_n(x_i - x_j) \leq  \sum_{\substack{i, j=1 \\ i \neq j}}^{n} F_n(x_i - x_j) \leq 2n \varepsilon^2$$
implies that $x_i - x_j$ has to be of the form $x_i - x_j = k/n + \delta$ with some $\delta \leq \varepsilon$. Moreover, since $F_n(0) = n$ we can also deduce that $|x_i - x_j| > 1/2n$ (provided $\varepsilon$ is sufficiently small) which then forces the existence of a global perturbation.
\end{proof}

\section*{Part 2 of the proof}
\subsection{The Main Contribution.}\label{sec:main_contribution}
We quickly recall what we already know from the first part of the proof. We know that any optimal configuration $\left\{x_1, \dots, x_n\right\}$ has to be close to the case of equispaced points. More precisely, it has to be of the form
$$ x_i = \frac{i}{n} + z + \varepsilon_i \qquad \mbox{where} \qquad |\varepsilon_i| \leq 2000n^2 \cdot e^{-\pi \alpha (2n-1)}$$
is exponentially small in $n$ and $z \in \mathbb{R}$ is an arbitrary shift. By translation symmetry, we can assume that $z=0$ and $\sum_j \varepsilon_j = 0$ and will do so in all subsequent arguments.

We can rewrite the sum over $\theta-$functions as a Fourier series
\begin{align*}
     \sum_{j=1}^{n} \theta_\alpha \left(x - \frac{j}{n} + \varepsilon_j \right) &=  \sum_{j=1}^{n}  \sum_{k \in \mathbb{Z}} e^{-\pi \alpha k^2} e^{2 \pi i k (x - \frac{j}{n} + \varepsilon_j )} \\
     &= \sum_{k \in \mathbb{Z}} e^{-\pi \alpha k^2} \left( \sum_{j=1}^{n} e^{2 \pi i k (\varepsilon_j - \frac{j}{n}) }\right)e^{2 \pi i k x} \\
     &=  \sum_{k \in \mathbb{Z}} e^{-\pi \alpha k^2} \left( \sum_{j=1}^{n}  e^{2 \pi i k \varepsilon_j } e^{-2 \pi i k  \frac{j}{n} }\right)e^{2 \pi i k x}.
\end{align*}
We remark that, as already noted above, when all the $\varepsilon_j = 0$, then
$$
    \sum_{j=1}^{n} \theta_\alpha \left(x - \frac{j}{n} \right) = n  + 2 n e^{-\pi \alpha n^2} \cos{(2 \pi n x)} + \mathcal{O}(n e^{-4 \pi \alpha n^2}).
$$
In that case, the minimal value is very close to the mean value $n$. It remains to show that small perturbations decrease the minimal value. Using the Taylor formula with the remainder term we note that the frequency $k =n$ contributes
\begin{align*}
    e^{-\pi \alpha n^2} \left( \sum_{j=1}^{n}  e^{2 \pi i n \varepsilon_j } \right)e^{2 \pi i n x}
    &=  e^{-\pi \alpha n^2} \left( 
        n+ 2 \pi i n\sum_{j=1}^{n}  \varepsilon_j  + \mathcal{O}\left(n^2  \sum_{j=1}^{n} \varepsilon_j^2\right) \right)e^{2 \pi i n x} \\
    &=e^{-\pi \alpha n^2} n e^{2 \pi i n x} + \mathcal{O}\left(n^2  e^{-\pi \alpha n^2}  \sum_{j=1}^{n} \varepsilon_j^2\right)
 \end{align*}
and the same contribution arises for $k =-n$. Thus the three terms
$$ B = \sum_{k \in \left\{-n, 0, n\right\}} e^{-\pi \alpha k^2} \left( \sum_{j=1}^{n}  e^{2 \pi i k \varepsilon_j } e^{-2 \pi i k  \frac{j}{n} }\right)e^{2 \pi i k x} $$ 
contribute, up to a small error term, the same quantity as the unperturbed case $\varepsilon_j = 0$ and 
$$ B = n  + 2 n e^{-\pi \alpha n^2} \cos{(2 \pi n x)} +\mathcal{O}\left(n^2  e^{-\pi \alpha n^2}  \sum_{j=1}^{n} \varepsilon_j^2\right).$$ 
Recall that, in the unperturbed case, the minima are attained at \mbox{$(k+1/2)/n$}, $0 \leq k \leq n-1$. We will show that a small perturbation necessarily makes one of the minima smaller and argue by contradiction: if there was a small perturbation of the points that increases the minimum, then, in particular, the size of the perturbation would have to be positive at all points of the form \mbox{$(k+1/2)/n$}, $0 \leq k \leq n-1$ (since that is where the minima are attained in the unperturbed case). The remainder of the argument is dedicated to showing that this cannot happen.

\subsection{A Trigonometric Lemma.}\label{sec:trig_poly}
This section proves a self-contained Lemma, which shows that a trigonometric polynomial of degree at most $(n-1)/2$ assumes negative values at at least one of the points $(k+1/2)/n$, for $0 \leq k \leq n-1$. The obtained bound is likely far from optimal but suffices for our purpose. Indeed, the rapid decay of the Gaussian weight ensures that any type of polynomial bound would suffice for the remainder of the argument.

\begin{lemma} \label{lem:small}
    If $f:[0,1] \rightarrow \mathbb{R}$ is a trigonometric polynomial of the form
    $$ f(x) = \sum_{1 \leq |j| \leq \frac{n-1}{2}} a_j e^{2 \pi i j x} \quad \mbox{then} \quad \min_{0 \leq k \leq n-1} f\left(\frac{k+1/2}{n}\right) \leq -\frac{\|f\|_{L^2([0,1])} }{3 n^{2}}.$$
\end{lemma}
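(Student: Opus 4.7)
The core idea is to sample $f$ at the $n$ midpoints $t_k = (k+1/2)/n$, $0 \leq k \leq n-1$, and exploit the fact that this sampling defines (up to unimodular phases) an honest Discrete Fourier Transform of the coefficients $a_j$. Write $y_k = f(t_k)$. First I would compute
\begin{equation}
y_k = \sum_{1 \leq |j| \leq (n-1)/2} a_j \, e^{\pi i j/n} \, e^{2 \pi i j k/n},
\end{equation}
so that, setting $b_j = a_j e^{\pi i j/n}$, the vector $(y_k)_{k=0}^{n-1}$ is (up to the factor $n$) the inverse DFT of the coefficient vector $(b_j)$ extended by zero at $j=0$ (this is well-defined because the $n-1$ indices $\pm 1,\dots,\pm(n-1)/2$ are distinct residues modulo $n$).

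Step one: the average of the $y_k$ vanishes. Summing over $k$ and interchanging, the inner geometric sum $\sum_{k=0}^{n-1} e^{2\pi i j k/n}$ is $n$ when $n \mid j$ and zero otherwise; since $0 < |j| \leq (n-1)/2 < n$, we have $n \nmid j$, so $\sum_{k} y_k = 0$. Step two: discrete Parseval applied to the DFT above gives
\begin{equation}
\sum_{k=0}^{n-1} |y_k|^2 = n \sum_{j} |b_j|^2 = n \sum_{1 \leq |j| \leq (n-1)/2} |a_j|^2 = n \, \|f\|_{L^2([0,1])}^2,
\end{equation}
since $|e^{\pi i j/n}|=1$ and the last equality is Plancherel on the circle. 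By pigeonhole, there is some index $k_0$ with $|y_{k_0}| \geq \|f\|_{L^2}$.

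Step three: convert this $L^\infty$ lower bound into a lower bound on the negative part, using that $f$ is real (so each $y_k$ is real) together with the zero-sum constraint. If $y_{k_0} \leq -\|f\|_{L^2}$, we are already done. Otherwise $y_{k_0} \geq \|f\|_{L^2}$, and $\sum_{k \neq k_0} y_k = -y_{k_0} \leq -\|f\|_{L^2}$, so averaging over the remaining $n-1$ values yields
\begin{equation}
\min_{0 \leq k \leq n-1} y_k \; \leq \; -\frac{\|f\|_{L^2}}{n-1} \; \leq \; -\frac{\|f\|_{L^2}}{3 n^2},
\end{equation}
where the last inequality is the trivial $3n^2 \geq n-1$. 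This yields the desired bound (in fact with a much better constant than needed).

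I do not expect any serious obstacle: the argument is entirely self-contained, and the only mild subtlety is making sure the nonzero frequencies $\{\pm 1,\dots,\pm(n-1)/2\}$ give $n-1$ genuinely distinct residues modulo $n$, which is where the degree hypothesis $|j| \leq (n-1)/2$ is used (and which implicitly asks $n$ to be odd so that $(n-1)/2$ is an integer; for even $n$ one would replace the bound by $\lfloor (n-1)/2 \rfloor$ with no change in the argument). The looseness between the proved bound $\|f\|_{L^2}/(n-1)$ and the stated $\|f\|_{L^2}/(3n^2)$ is harmless because the subsequent use of the lemma is paired with a Gaussian-decaying weight, where polynomial losses are irrelevant.
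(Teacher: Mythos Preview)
Your argument is correct and, in fact, yields the sharper bound $\min_k f(t_k)\le -\|f\|_{L^2}/(n-1)$, which easily implies the stated inequality. The key observation---that the frequencies $\{\pm 1,\dots,\pm\lfloor(n-1)/2\rfloor\}$ are pairwise distinct modulo $n$, so that sampling at the $n$ midpoints is an isometric DFT and discrete Parseval gives $\sum_k y_k^2 = n\|f\|_{L^2}^2$---is exactly what makes the proof so short.

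This is a genuinely different and considerably more elementary route than the paper's. The paper does not exploit the DFT/Parseval identity at the sample points; instead it argues analytically: from the zero-sum it bounds $\max_k y_k \le n|X|$ with $X=\min_k y_k$, then applies a modified Poincar\'e inequality (for functions with boundary values bounded by $M=n|X|$) on each subinterval of length $1/n$, combines this with the Bernstein-type estimate $\|f'\|_{L^2}\le (n-1)\pi\|f\|_{L^2}$ coming from the degree restriction, and finally solves a quadratic inequality in $|X|$. That machinery yields only the weaker $|X|\gtrsim \|f\|_{L^2}/n^2$. Your approach trades the analytic tools (Poincar\'e, Bernstein) for the exact algebraic structure of equispaced sampling, which is both simpler and sharper here; the paper's method, on the other hand, would be more robust if the sample points were perturbed away from the exact grid $(k+1/2)/n$, since Parseval would no longer be available but the Poincar\'e argument still is.
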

We note that the restriction on the frequency $|j| \leq (n-1)/2$ is tight. Suppose $n$ is even and consider the trigonometric polynomial
$$ f(x) =  e^{2\pi i \frac{n}{2} x} +  e^{-2\pi i \frac{n}{2} x} = 2\cos{(\pi n x)}$$
which satisfies
$$ f\left(\frac{k+1/2}{n}\right) = 2\cos{(\pi (k+1/2))} = 0.$$

Before stating the proof of Lemma \ref{lem:small}, we establish one of the two main ingredients: a modified Poincar\'{e} inequality for functions that do not quite vanish on the boundary. Needless to say, the tools and arguments used to establish this inequality are completely standard and we do not claim the inequality to be novel in any sense. Many similar inequalities are known in the general context of trace inequalities and embedding results for Sobolev spaces.
\begin{proposition}[Modified Poincar\'{e} Inequality] Let $f:[a,b] \rightarrow \mathbb{R}$ be continuous and differentiable on $(a,b)$ satisfying $|f(a)| \leq M$ and
$|f(b)| \leq M$. Then
\begin{align*}
    \int_a^b f(x)^2 dx &\leq M^2 (b-a) + 2M \sqrt{b-a} \left(\frac{(b-a)^2}{\pi^2} \int_a^b f'(x)^2  dx \right)^{1/2} \\
    &+ \frac{(b-a)^2}{\pi^2} \int_a^b f'(x)^2  dx
\end{align*}
\end{proposition}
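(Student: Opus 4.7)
The plan is to decompose $f$ into an affine piece that carries the boundary data and a remainder that vanishes at $a$ and $b$, then apply the classical Wirtinger/Poincar\'e inequality to the remainder, and finally reassemble via the triangle inequality in $L^2$. Explicitly, I would write $f = L + h$, where $L$ is the unique affine function with $L(a) = f(a)$, $L(b) = f(b)$, and $h = f - L$. Since $|L(x)| \leq \max(|f(a)|,|f(b)|) \leq M$ throughout $[a,b]$, we get immediately $\|L\|_{L^2([a,b])} \leq M\sqrt{b-a}$.

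Next, because $h(a) = h(b) = 0$, expand $h$ in the sine basis
\begin{equation}
    h(x) = \sum_{k \geq 1} b_k \sin\!\left( \frac{k\pi(x-a)}{b-a} \right)
\end{equation}
on $[a,b]$. Parseval gives $\|h\|_{L^2}^2 = \frac{b-a}{2}\sum_k b_k^2$ and $\|h'\|_{L^2}^2 = \frac{b-a}{2}\sum_k (k\pi/(b-a))^2 b_k^2$, so the standard Wirtinger inequality
\begin{equation}
    \|h\|_{L^2} \leq \frac{b-a}{\pi}\, \|h'\|_{L^2}
\end{equation}
follows by keeping the smallest eigenvalue $(\pi/(b-a))^2$. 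The key remaining point is that passing from $f$ to its boundary-adjusted version $h$ does not cost anything in $H^1$: since $L'$ is constant and $\int_a^b f'(x)\,dx = f(b)-f(a) = L'(b-a)$, we compute
\begin{equation}
    \langle f', L' \rangle_{L^2} = L' \int_a^b f'(x)\, dx = (L')^2 (b-a) = \|L'\|_{L^2}^2,
\end{equation}
hence $\|h'\|_{L^2}^2 = \|f'\|_{L^2}^2 - \|L'\|_{L^2}^2 \leq \|f'\|_{L^2}^2$. This orthogonality observation is the only step that needs a moment of thought and it is what lets us replace $\|h'\|_{L^2}$ by $\|f'\|_{L^2}$ without loss in the final bound.

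Combining everything via the triangle inequality,
\begin{equation}
    \|f\|_{L^2} \leq \|L\|_{L^2} + \|h\|_{L^2} \leq M\sqrt{b-a} + \frac{b-a}{\pi}\|f'\|_{L^2},
\end{equation}
and squaring (using $(A+B)^2 = A^2 + 2AB + B^2$ with $A = M\sqrt{b-a}$ and $B = \tfrac{b-a}{\pi}\|f'\|_{L^2}$) yields exactly the stated estimate
\begin{equation}
    \int_a^b f(x)^2\, dx \leq M^2(b-a) + 2M\sqrt{b-a}\left( \frac{(b-a)^2}{\pi^2} \int_a^b f'(x)^2\, dx \right)^{1/2} + \frac{(b-a)^2}{\pi^2} \int_a^b f'(x)^2\, dx.
\end{equation}
There is no serious obstacle; the only mildly non-routine input is the orthogonality relation $\|f'\|^2 = \|L'\|^2 + \|h'\|^2$ that justifies ignoring the affine correction in the gradient term, and the standard Wirtinger/Poincar\'e constant $(b-a)/\pi$ for functions vanishing at both endpoints.
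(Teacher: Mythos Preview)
Your proof is correct and arrives at exactly the stated bound, but it proceeds along a different line from the paper's own argument. The paper first replaces $f$ by $|f|$ (which leaves both $\|f\|_{L^2}$ and $\|f'\|_{L^2}$ unchanged), then decomposes by \emph{level sets}: on the set $\{f\geq M\}$ it applies the Wirtinger inequality to $f-M$ (which vanishes at the boundary of each component of that set), expands the square, uses Cauchy--Schwarz, and solves the resulting quadratic in $Z=\left(\int (f)^2\mathbf{1}_{f\geq M}\right)^{1/2}$; on $\{f<M\}$ it uses the trivial bound $f^2\leq M^2$. Your route instead decomposes $f$ \emph{linearly} as an affine interpolant $L$ plus a remainder $h$ vanishing at the endpoints, applies Wirtinger to $h$, and exploits the orthogonality $\|f'\|_{L^2}^2=\|L'\|_{L^2}^2+\|h'\|_{L^2}^2$ (a genuinely nice observation) to pass from $\|h'\|_{L^2}$ back to $\|f'\|_{L^2}$; the triangle inequality in $L^2$ and squaring then finish. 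Your argument is shorter and avoids the level-set bookkeeping and the quadratic manipulation; the paper's approach, on the other hand, keeps the indicator $\mathbf{1}_{f\geq M}$ inside the derivative integral in intermediate steps, which in principle retains slightly more information (it is only discarded at the very end). For the purposes of this paper, where only the final inequality is used and polynomial losses are acceptable, either proof serves equally well.
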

\begin{proof}
The following makes sense in the more general Sobolev space $H^1$ (as opposed to the smaller space $C^1$) but this will not be relevant here.
We first note that replacing $f(x)$ by $|f(x)|$ does not change $\|f\|_{L^2}$ and leaves $\|f'\|_{L^2}$ invariant. It thus suffices to prove the inequality for non-negative $f(x)$. We proceed with basic facts: the first is the standard Poincar\'{e} inequality, implying that if $g:[c,d] \rightarrow \mathbb{R}$
satisfies $g(c) = g(d) = 0$ then 
$$ \int_c^d g(x)^2 dx \leq \frac{(d-c)^2}{\pi^2} \int_c^d g'(x)^2 dx.$$
This one-dimensional inequality is sometimes known as the \textit{Wirtinger inequality} (for example in Blaschke's 1916 book \textit{Kreis und Kugel} \cite{blaschke}). However, we note that it seems to have been discovered many times: for example, Hurwitz \cite{hurwitz} already used it in his 1901 proof of the isoperimetric inequality. We refer to Payne and Weinberger \cite{PayWei60} or work of the second author \cite{Stei2} for more on Poincar\'{e} inequalities.
This inequality then implies that 
$$ \int_a^b (f(x)-M)^2 \mathbf{1}_{f(x) \geq M} \, dx \leq \frac{(b-a)^2}{\pi^2} \int_a^b f'(x)^2 \mathbf{1}_{f(x) \geq M} \, dx$$
which we can square out and write as
\begin{align*}
    \int_a^b f(x)^2\mathbf{1}_{f(x) \geq M} \, dx + M^2 \, |\left\{ f \geq M \right\}|
    &\leq 2M \int_a^b f(x)\mathbf{1}_{f(x) \geq M} \, dx\\
    & \quad + \frac{(b-a)^2}{\pi^2} \int_a^b f'(x)^2 \mathbf{1}_{f(x) \geq M} \, dx
 \end{align*}
The first integral on the right-hand side can be bounded with Cauchy--Schwarz
\begin{align*}
    \int_a^b f(x)\mathbf{1}_{f(x) \geq M} \, dx \leq |\left\{f \geq M\right\}|^{1/2} \left( \int_a^b f(x)^2\mathbf{1}_{f(x) \geq M} \, dx \right)^{1/2}
\end{align*}
which leads to the estimate, abbreviating $Z = \left( \int_a^b f(x)^2\mathbf{1}_{f(x) \geq M} \, dx \right)^{1/2}$,
$$ Z^2 - 2 M \, |\left\{f \geq M\right\}|^{1/2} Z + M^2 \, |\left\{f \geq M\right\}| \leq  \frac{(b-a)^2}{\pi^2} \int_a^b f'(x)^2 \mathbf{1}_{f(x) \geq M} \, dx.$$
The left-hand side can be factored as
$$ (Z - M \, |\left\{f \geq M\right\}|^{1/2})^2 \leq  \frac{(b-a)^2}{\pi^2} \int_a^b f'(x)^2 \mathbf{1}_{f(x) \geq M} \, dx$$
and thus
$$ Z \leq M \, |\left\{f \geq M\right\}|^{1/2} +  \left( \frac{(b-a)^2}{\pi^2} \int_a^b f'(x)^2 \mathbf{1}_{f(x) \geq M} \, dx\right)^{1/2}$$
We also have the trivial estimate
$$ \int_a^b f(x)^2 \mathbf{1}_{f(x) \leq M} \, dx \leq \, |\left\{ f(x) \leq M \right\}| \, M^2.$$
Adding the last estimate to the square of the penultimate estimate and using $|\left\{f \geq M\right\}| \leq b-a$, we arrive at
\begin{align*}
    \int_a^b f(x)^2 dx &\leq M^2 (b-a) + 2 M (b-a)^{1/2}  \left( \frac{(b-a)^2}{\pi^2} \int_a^b f'(x)^2 \mathbf{1}_{f(x) \geq M} \, dx\right)^{1/2}\\
    & \quad + \frac{(b-a)^2}{\pi^2} \int_a^b f'(x)^2 \mathbf{1}_{f(x) \geq M} \, dx.
\end{align*}
\end{proof}

\begin{proof}[Proof of Lemma \ref{lem:small}]
   The minimum is necessarily $\leq 0$ since
    \begin{align*}
         \min_{0 \leq k \leq n-1} f\left(\frac{k+1/2}{n}\right) &\leq \frac{1}{n} \sum_{0 \leq k \leq n-1} f\left(\frac{k+1/2}{n}\right) \\
         &= \frac{1}{n} \sum_{k = 0}^{n-1}   \sum_{1 \leq |j| \leq n-1} a_j e^{2 \pi i j (k+1/2)/n} \\
&= \frac{1}{n} \sum_{1 \leq |j| \leq n-1}  \sum_{k = 0}^{n-1}   a_j e^{2 \pi i \frac{j}{2n}} e^{2 \pi i j k/n} \\
&=  \frac{1}{n} \sum_{1 \leq |j| \leq n-1}    a_j e^{2 \pi i \frac{j}{2n}} \sum_{k = 0}^{n-1}  e^{2 \pi i j k/n} = 0.
    \end{align*}
Let us now assume that the minimum is negative but very close to 0
    $$ X = \min_{0 \leq k \leq n-1} f\left(\frac{k+1/2}{n}\right) \leq 0.$$
Roots of unity summing to 0 then shows, just as above, that
\begin{align}
   0 &=  \sum_{0 \leq k \leq n-1} f\left(\frac{k+1/2}{n}\right) \geq (n-1) X + \max_{0 \leq k \leq n-1} f\left(\frac{k+1/2}{n}\right)\\
   &\geq n X + \max_{0 \leq k \leq n-1} f\left(\frac{k+1/2}{n}\right).
\end{align}
from which we deduce
$$ \max_{0 \leq k \leq n-1} f\left(\frac{k+1/2}{n}\right)  \leq (-X)n.$$
Using this in combination with the modified Poincar\'{e} inequality with $M = n |X|$ we deduce 
\begin{align*}
\int_{\frac{k+1/2}{n}}^{\frac{k+3/2}{n}} f(x)^2 dx &\leq nX^2 +  2|X|\sqrt{n} \sqrt{\frac{1}{n^2 \pi^2} \int_{\frac{k+1/2}{n}}^{\frac{k+3/2}{n}} f'(x)^2 dx} \\
&+\frac{1}{n^2 \pi^2} \int_{\frac{k+1/2}{n}}^{\frac{k+3/2}{n}} f'(x)^2 dx.
\end{align*}
Summing over all the intervals (periodically interpreted), we get
\begin{align*}
 \int_0^1 f(x)^2 \, dx &\leq n^2 X^2 +  \frac{1}{n^2 \pi^2}\int_0^1 f'(x)^2 \, dx \\
 &+  2|X|\sqrt{n} \sum_{k=0}^{n-1}   \sqrt{\frac{1}{n^2 \pi^2} \int_{\frac{k+1/2}{n}}^{\frac{k+3/2}{n}} f'(x)^2 \, dx}
\end{align*}
As for the remaining sum, we use the Cauchy--Schwarz inequality to bound
\begin{align*}
     \sum_{k=0}^{n-1} 1 \cdot  \sqrt{\frac{1}{n^2 \pi^2} \int_{\frac{k+1/2}{n}}^{\frac{k+3/2}{n}} f'(x)^2 \, dx} &\leq \sqrt{n} \sqrt{ \sum_{k=0}^{n-1} \frac{1}{n^2 \pi^2} \int_{\frac{k+1/2}{n}}^{\frac{k+3/2}{n}} f'(x)^2 \, dx} \\
     &= \sqrt{n}  \sqrt{  \frac{1}{n^2 \pi^2} \int_{0}^{1} f'(x)^2 \, dx}
\end{align*}
Altogether, this implies
\begin{align*}
    \int_0^1 f(x)^2 \, dx &\leq n^2 X^2 +  \frac{1}{n^2 \pi^2}\int_0^1 f'(x)^2 \, dx + 2|X|n \sqrt{  \frac{1}{n^2 \pi^2} \int_{0}^{1} f'(x)^2 \, dx}.
\end{align*}

As $f$ is a trigonometric polynomial of degree at most $(n-1)/2$, we have
$$ \int_0^1 f(x)^2 dx =  \sum_{1 \leq |j| \leq \frac{n-1}{2}} |a_j|^2$$
    as well as
    \begin{align*}
     \int_0^1 f'(x)^2 dx &=  \sum_{1 \leq |j| \leq \frac{n-1}{2}} (2 \pi j)^2 |a_j|^2 \\
     &\leq  (n-1)^2 \pi^2  \sum_{1 \leq |j| \leq \frac{n-1}{2}}  |a_j|^2 = (n-1)^2 \pi^2  \int_0^1 f(x)^2 \,dx.
     \end{align*}
Plugging this in, we get
\begin{align*}
       \int_0^1 f(x)^2 dx  &\leq \frac{1}{n^2 \pi^2}  \int_{0}^1 f'(x)^2 \, dx +X^2 n^2  + 2|X| \sqrt{  \frac{1}{n^2 \pi^2} \int_{0}^{1} f'(x)^2 \, dx}  \\
       &\leq \left(1 - \frac{1}{n}\right)^2 \int_0^1 f(x)^2 \, dx  + X^2 n^2 + 2|X| n \left(1 - \frac{1}{n}\right) \sqrt{\int_0^1 f(x)^2 \, dx}.
\end{align*}
For an arbitrary parameter $\gamma > 0$, the inequality
$$ \gamma^2 \leq \left(1- \frac{1}{n}\right)^2 \gamma^2 + X^2 n^2 + 2|X| n \left(1 - \frac{1}{n}\right) \gamma$$
can be simplified using $(1-1/n)^2 \leq 1-1/n$ and $(1-1/n) \leq 1$ to imply that
$$   X^2 n^2 + 2|X| n \gamma - \frac{\gamma^2}{n} \geq 0$$
which, by solving the quadratic equation can be seen to imply that
$$ |X| \geq \frac{\sqrt{n^2+n} - n}{n^2}\gamma \geq   \frac{\gamma}{3 n^2}$$
which is the desired result.
\end{proof}

\begin{remark}
    Much of the difficulty comes from the fact that we only evaluate the trigonometric polynomial in equispaced points. If one was just interested in the minimum being small in \textit{some} place, there is a very elementary argument which we conclude for the sake of context.
\end{remark}

\begin{lemma}
    Let $f(x) = \sum_{1 \leq |j| \leq n-1} a_j e^{2\pi i j x}$ be a real-valued trigonometric polynomial. Then
    $$\min_{0 \leq x \leq 1} f(x) \leq - \frac{\|f\|_{L^2}}{3 \sqrt{n}}.$$
\end{lemma}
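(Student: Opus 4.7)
The plan is to combine the elementary $L^1$ pigeonhole bound of Lemma \ref{lem:basic} with a crude upper bound on $\|f\|_{L^\infty}$. Since $f$ contains no constant term, $\int_0^1 f(x)\,dx = 0$, so Lemma \ref{lem:basic} immediately yields $\min_x f(x) \leq -\|f\|_{L^1}/2$. It therefore suffices to establish the lower bound $\|f\|_{L^1} \geq \|f\|_{L^2}/(\text{const}\cdot \sqrt{n})$.

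The key interpolation step is the trivial identity
$$ \|f\|_{L^2}^2 = \int_0^1 f(x)\overline{f(x)}\,dx \leq \|f\|_{L^1}\|f\|_{L^\infty}, $$
which rearranges to $\|f\|_{L^1} \geq \|f\|_{L^2}^2/\|f\|_{L^\infty}$. For the matching upper bound on $\|f\|_{L^\infty}$, I would apply Cauchy--Schwarz directly to the Fourier series, using that $f$ has at most $2(n-1)$ nonzero Fourier coefficients:
$$ \|f\|_{L^\infty} \leq \sum_{1 \leq |j| \leq n-1} |a_j| \leq \sqrt{2(n-1)}\left(\sum_{1 \leq |j| \leq n-1}|a_j|^2\right)^{1/2} = \sqrt{2(n-1)}\,\|f\|_{L^2}, $$
the last equality by Parseval's identity.

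Combining the two estimates gives $\|f\|_{L^1} \geq \|f\|_{L^2}/\sqrt{2(n-1)}$, and then Lemma \ref{lem:basic} yields
$$ \min_{0 \leq x \leq 1} f(x) \leq -\frac{\|f\|_{L^2}}{2\sqrt{2(n-1)}} \leq -\frac{\|f\|_{L^2}}{3\sqrt{n}}, $$
where the last inequality is equivalent to $8(n-1) \leq 9n$, which holds for every $n \geq 1$. There is essentially no obstacle here; the whole argument is a three-line chain of Cauchy--Schwarz inequalities. The contrast with Lemma \ref{lem:small} is precisely that no such $L^1$-to-minimum passage is available when one is restricted to evaluating at the equispaced sample points $(k+1/2)/n$, which is why the more delicate Poincaré-type argument was needed there.
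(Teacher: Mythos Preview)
Your proof is correct and follows essentially the same route as the paper: combine Lemma~\ref{lem:basic} with the interpolation $\|f\|_{L^1} \geq \|f\|_{L^2}^2/\|f\|_{L^\infty}$, then bound $\|f\|_{L^\infty}$ by Cauchy--Schwarz over the (at most) $2(n-1)$ Fourier coefficients. The only cosmetic difference is that you use the slightly sharper $\sqrt{2(n-1)}$ where the paper uses $\sqrt{2n}$, and you verify the final numerical inequality $8(n-1)\leq 9n$ explicitly rather than invoking $2\sqrt{2}\leq 3$.
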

\begin{proof}
    We also have the trivial estimate
    $$ \int_0^1 f(x)^2 \leq \|f\|_{L^{\infty}} \|f\|_{L^1} \quad \mbox{and thus} \quad \|f\|_{L^1} \geq \frac{ \|f\|_{L^2}^2}{\|f\|_{L^{\infty}}}.$$
    Appealing to Lemma \ref{lem:basic}, we deduce
      $$  \min_{0 \leq x \leq 1} f(x)  \leq - \frac{1}{2} \|f\|_{L^1} \leq - \frac{1}{2} \frac{ \|f\|_{L^2}^2}{\|f\|_{L^{\infty}}}.$$
  We have, via Plancherel, that
  $$ \|f\|_{L^2}^2 = \sum_{1 \leq j \leq n-1} |a_j|^2$$
  and, via the triangle inequality and Cauchy--Schwarz inequality, that
    $$ \|f\|_{L^{\infty}} \leq \sum_{1 \leq j \leq n-1} |a_j| \leq \sqrt{2n} \left( \sum_{1 \leq j \leq n-1} |a_j|^2\right)^{1/2} = \sqrt{2n} \|f\|_{L^2}.$$
    From this and $2 \sqrt{2} \leq 3$ the result follows.
\end{proof}

\subsection{Outline of the remaining argument.}\label{sec:outline_g2_h}
In this section we outline how the argument will be concluded. We first recall that
\begin{align*}
     \sum_{j=1}^{n} \theta_\alpha \left(x - \frac{j}{n} + \varepsilon_j \right) = \sum_{k \in \mathbb{Z}} e^{-\pi \alpha k^2} \left( \sum_{j=1}^{n}  e^{2 \pi i k \varepsilon_j } e^{-2 \pi i k  \frac{j}{n} }\right)e^{2 \pi i k x}.
\end{align*}
The main contribution is coming from the three terms $k \in \{-n,0,n\}$
$$ B = \sum_{k \in \left\{-n, 0, n\right\}} e^{-\pi \alpha k^2} \left( \sum_{j=1}^{n}  e^{2 \pi i k \varepsilon_j } e^{-2 \pi i k  \frac{j}{n} }\right)e^{2 \pi i k x} $$ 
which contribute
$$ B = n  + 2 n e^{-\pi \alpha n^2} \cos{(2 \pi n x)} +\mathcal{O}\left( n^2  e^{-\pi \alpha n^2}  \sum_{j=1}^{n} \varepsilon_j^2\right).$$
We will choose to sum over even more terms (even though they are rather small), namely $k \in n \Z$, so as to allow for a comparison to the minimal value attained by equidistributed points. For this purpose we set
\begin{align*}
    A(x) &= \sum_{k \in n \mathbb{Z}} e^{-\pi \alpha k^2} \left( \sum_{j=1}^{n}  e^{2 \pi i k \varepsilon_j } e^{-2 \pi i k  \frac{j}{n} }\right)e^{2 \pi i k x} \\
    &=  \sum_{k \in n \mathbb{Z}} e^{-\pi \alpha k^2} \left( \sum_{j=1}^{n}  e^{2 \pi i k \varepsilon_j } \right)e^{2 \pi i k x}, 
\end{align*}
where the simplification comes from the fact that these exponential expressions are all 1 when $k$ is a multiple of $n$.
In particular, all the Fourier coefficients are reasonably close to $n$. More precisely, using again that the sum over all displacements $\varepsilon_j$ equals 0, we get
\begin{align*}
 \sum_{j=1}^{n}  e^{2 \pi i k \varepsilon_j} =   n +  \sum_{j=1}^{n}  \left( e^{2 \pi i k \varepsilon_j } - 1 \right)= n +  \sum_{j=1}^{n}  \left( e^{2 \pi i k \varepsilon_j } - 1 - 2 \pi i k \varepsilon_j \right).
\end{align*}
We have, for all $x \in \mathbb{R}$ that
$| e^{ix} - 1 - ix| \leq x^2$
and thus
$$ \left| -n + \sum_{j=1}^{n}  e^{2 \pi i k \varepsilon_j } \right| \leq 4 \pi^2 k^2 \sum_{j=1}^{n}  \varepsilon_j^2.$$
Combining this with
$$ \sum_{\substack{k \in n \mathbb{Z} \\ |k| >n }} k^2 e^{-\pi \alpha k^2} \lesssim n^2 e^{-\pi \alpha n^2} $$
we deduce that
$$ A(x)  =   \sum_{j=1}^{n} \theta_\alpha\left(x - \frac{j}{n}  \right) +\mathcal{O}\left( n^2 e^{-\pi \alpha n^2}  \sum_{j=1}^{n} \varepsilon_j^2\right).$$
It is our goal to show that the perturbation induced by $\varepsilon_j \neq 0$ has to decrease the value in at least one of the minima. To this end, we split the function as
$$  \sum_{j=1}^{n} \theta_\alpha \left(x - \frac{j}{n} + \varepsilon_j \right)  = A(x) + g_1(x) + g_2(x) + h(x), $$
where $A$ sums over all multiples of $n$, $g_1$ sums over the first $(n-1)/2$ frequencies, $g_2$ sums frequencies between $(n-1)/2$ and $n-1$ and $h$ sums over the rest, frequencies larger than $n$ and where $n$ does not divide $k$. Thus
\begin{align*}
    g_1(x)  &=    \sum_{1 \leq |k| \leq \frac{n-1}{2}} e^{-\pi \alpha k^2} \left( \sum_{j=1}^{n}  e^{2 \pi i k \varepsilon_j } e^{-2 \pi i k  \frac{j}{n} }\right)e^{2 \pi i k x}
\end{align*}
while $g_2$ sums over the remaining small frequencies
\begin{align*}
    g_2(x)  &=    \sum_{\frac{n-1}{2} <  |k| \leq n-1} e^{-\pi \alpha k^2} \left( \sum_{j=1}^{n}  e^{2 \pi i k \varepsilon_j } e^{-2 \pi i k  \frac{j}{n} }\right)e^{2 \pi i k x}, 
    \end{align*}
    and $h$ sums over the remaining terms
 \begin{align*}   
    h(x)    &=    \sum_{\substack{ |k| \geq n+1 \\ n \, \nmid \, k}} e^{-\pi \alpha k^2} \left( \sum_{j=1}^{n}  e^{2 \pi i k \varepsilon_j } e^{-2 \pi i k  \frac{j}{n} }\right)e^{2 \pi i k x}.
\end{align*}
The remaining argument proceeds as follows
\begin{enumerate}
    \item We show, in the next section, that $\|g_1\|_{L^2}$ is not too small (in terms of $\sum_{j=1}^{n} \varepsilon_j^2$). The Discrete Fourier Transform naturally arises in the process.
    \item Lemma \ref{lem:small} then implies that
     $$\min_{0 \leq k \leq n-1} g_1\left(\frac{k+1/2}{n}\right) \leq - \frac{\|g_1\|_{L^2}}{3n^{2}} $$
     is fairly negative.
     \item We show $\|g_2\|_{L^{\infty}} \ll \|g_1\|_{L^2}$ (which follows again from the properties of the Discrete Fourier Transform) and that the same is true for $h$. 
     \item Thus the sum of the three terms is fairly negative in at least one of the points of the form $(k+1/2)/n$ and this then implies the result.
\end{enumerate}

\subsection{Discrete Fourier Transform.}\label{sec:DFT}
We recall again that
\begin{align*}
     \sum_{j=1}^{n} \theta_\alpha \left(x - \frac{j}{n} + \varepsilon_j \right) =  \sum_{k \in \mathbb{Z}} e^{-\pi \alpha k^2} \left( \sum_{j=1}^{n}  e^{2 \pi i k \varepsilon_j } e^{-2 \pi i k  \frac{j}{n} }\right)e^{2 \pi i k x}.
\end{align*}
We also note that the $\varepsilon_j$ are fairly small: \eqref{eq:Fourier_coeff_small} together with the proof of Lemma \ref{lem:regular_gaps} gives
$$ \max_{1 \leq j \leq n} |\varepsilon_j| = \mathcal{O} \left(  n^2 \cdot e^{- 2 \pi \alpha n} \right),$$
where the implicit constant depends on $\alpha$.
As it turns out, since these are exponentially small in $n$, the basic Taylor expansion
$$ e^{2 \pi i k \varepsilon_j } = 1 + 2 \pi i k \varepsilon_j + \mathcal{O}(k^2 \varepsilon_j^2)$$ 
is highly accurate and we deduce, as long as $k$ is not a multiple of $n$, that
\begin{equation}
    \sum_{j=1}^{n}  e^{2 \pi i k \varepsilon_j } e^{-2 \pi i k  \frac{j}{n} } = 2 \pi i k  \sum_{j=1}^{n}  \varepsilon_j  e^{-2 \pi i k  \frac{j}{n} } + \mathcal{O}\left(k^2 \sum_{j=1}^{n} \varepsilon_j^2\right).
\end{equation}
We observe that this is, up to various types of rescaling, simply a Discrete Fourier Transform of $(\varepsilon_1, \dots, \varepsilon_n)$. Since the $\varepsilon_j$ are all real-valued, we have the symmetry
$$ \left| \sum_{j=1}^{n}  \varepsilon_j  e^{-2 \pi i k  \frac{(n-j)}{n} } \right| =  \left| \sum_{j=1}^{n}  \varepsilon_j  e^{-2 \pi i k  \frac{j}{n} } \right| $$
The Discrete Fourier Transform preserves the $\ell^2$-norm and therefore
$$ n\sum_{j=1}^{n} \varepsilon_j^2 = \sum_{k=1}^{n-1} \left| \sum_{j=1}^{n}  \varepsilon_j  e^{-2 \pi i k  \frac{j}{n} } \right|^2$$
where we omit the $k=0$ term because $\varepsilon_1 + \dots + \varepsilon_n = 0$.
This implies
\begin{align*}
     \sum_{j=1}^{n} \varepsilon_j^2 &= \frac{1}{n}  \sum_{k=1}^{n-1} \left| \sum_{j=1}^{n}  \varepsilon_j  e^{-2 \pi i k  \frac{j}{n} } \right|^2 
     \leq  \max_{1 \leq k \leq n-1} \left| \sum_{j=1}^{n}  \varepsilon_j  e^{-2 \pi i k  \frac{j}{n} } \right|^2.
\end{align*}
This immediately implies that at least one Fourier coefficient is large
$$ \max_{1 \leq k \leq n-1} \left| \sum_{j=1}^{n}  \varepsilon_j  e^{-2 \pi i k  \frac{j}{n} } \right| \geq \left( \sum_{j=1}^{n} \varepsilon_j^2 \right)^{1/2} \gg  \mathcal{O}\left(n^2 \sum_{j=1}^{n} \varepsilon_j^2\right)$$
and, in particular, is many orders of magnitude larger than the error terms (recall that the error terms are exponentially small in $n$).

\subsection{The final estimates.}\label{sec:final_estimate}
This also implies, using the Plancherel identity, that $g_1$ is large in $L^2$ since
\begin{align*}
    \|g_1(x)\|_{L^2} &=   \left\| \sum_{1 \leq |k| \leq \frac{n-1}{2}} e^{-\pi \alpha k^2} \left( \sum_{j=1}^{n}  e^{2 \pi i k \varepsilon_j } e^{-2 \pi i k  \frac{j}{n} }\right)e^{2 \pi i k x} \right\|_{L^2} \\
    &= \left(  \sum_{1 \leq |k| \leq \frac{n-1}{2}} e^{-2 \pi \alpha k^2} \left| \sum_{j=1}^{n}  e^{2 \pi i k \varepsilon_j } e^{-2 \pi i k  \frac{j}{n} }\right|^2 \right)^{1/2}.
\end{align*}
The worst case is when most of the Fourier energy is localized at high frequencies and thus we can remove the smallest weight and deduce
\begin{align*}
    \|g_1(x)\|_{L^2}  \geq e^{-\pi \alpha \left( \frac{n-1}{2} \right)^2}\left(  \sum_{1 \leq |k| \leq \frac{n-1}{2}} \left| \sum_{j=1}^{n}  e^{2 \pi i k \varepsilon_j } e^{-2 \pi i k  \frac{j}{n}} \right|^2 \right)^{1/2}.
\end{align*}
At this point, we can invoke a Taylor expansion and argue that
\begin{align*}
    \|g_1(x)\|_{L^2} 
      &\geq e^{-\pi \alpha \left( \frac{n-1}{2} \right)^2}\left(  \sum_{1 \leq |k| \leq \frac{n-1}{2}} \left|  \ \sum_{j=1}^{n}  \varepsilon_j  e^{-2 \pi i k  \frac{j}{n} } +  \mathcal{O}\left(k^2 \sum_{j=1}^{n} \varepsilon_j^2\right) \right|^2 \right)^{1/2}.
\end{align*}
Now, the argument from the previous section comes into play: we do not have information about any individual
Fourier coefficient but we know that at least one of them is large 
$$ \max_{1 \leq k \leq n-1} \left| \sum_{j=1}^{n}  \varepsilon_j  e^{-2 \pi i k  \frac{j}{n} } \right| \geq \left( \sum_{j=1}^{n} \varepsilon_j^2 \right)^{1/2} \gg  \mathcal{O}\left(n^2 \sum_{j=1}^{n} \varepsilon_j^2\right)$$
and thus, for $n$ sufficiently large,
\begin{align*}
    \|g_1(x)\|_{L^2} \geq  \frac{ e^{-\pi \alpha \left( \frac{n-1}{2} \right)^2}}{2} \left(\sum_{j=1}^{n} \varepsilon_j^2 \right)^{1/2}.
\end{align*}
 It is rather easy to show that $g_2$ is many orders of magnitude smaller than $g_1$ as the Fourier coefficients are very nearly the same. Since the discrete Fourier transform has the symmetry
$$ \left| \sum_{j=1}^{n}  \varepsilon_j  e^{-2 \pi i k  \frac{(n-j)}{n} } \right| =  \left| \sum_{j=1}^{n}  \varepsilon_j  e^{-2 \pi i k  \frac{j}{n} } \right|,$$
the same Plancherel argument shows that, for $n$ sufficiently large,
$$ \| g_2\|_{L^2} \leq 20 n^{3/2}  e^{-\pi \alpha (n/2)^2}  \left(\sum_{j=1}^{n} \varepsilon_j^2 \right)^{1/2}.$$
This is exponentially smaller than $g_1(x)$ because
$$e^{-\pi \alpha (n/2)^2} \qquad \mbox{is exponentially smaller than} \qquad  e^{-\pi \alpha \left( \frac{n-1}{2} \right)^2}.$$

We will require pointwise estimates for what follows. However, the decay is sufficiently strong so that we can simply take a triangle inequality.
Using again the cancellation of the sum of roots of unity together with the fact that for $k \leq n$ we have $k^2 \varepsilon_j^2 \ll |k \varepsilon_j|$, we get for sufficiently large $n$
\begin{align*}
 \left| \sum_{j=1}^{n}  e^{2 \pi i k \varepsilon_j } e^{-2 \pi i k  \frac{j}{n} }\right| &= 
 \left| \sum_{j=1}^{n} (1 + 2\pi i k \varepsilon_j + \mathcal{O}(k^2 \varepsilon_j^2))  e^{-2 \pi i k  \frac{j}{n} }\right|\\
 &=  (1+o(1)) \left|  2 \pi i k  \sum_{j=1}^{n}   \varepsilon_j    e^{-2 \pi i k  \frac{j}{n} } \right| \leq 10k \left|\sum_{j=1}^{n}   \varepsilon_j    e^{-2 \pi i k  \frac{j}{n} } \right|.
\end{align*}
We deduce, since $k > (n-1)/2$ and thus $k \geq n/2$, that for $n$ sufficiently large,
\begin{align*}
    \left\|  g_2\right\|_{L^{\infty}} & \leq \sum_{\frac{n-1}{2} <  |k| \leq n-1} e^{-\pi \alpha k^2} \left| \sum_{j=1}^{n}  e^{2 \pi i k \varepsilon_j } e^{-2 \pi i k  \frac{j}{n} }\right| \\
    &\leq 10 \sum_{\frac{n-1}{2} <  |k| \leq n-1} e^{-\pi \alpha k^2} \, k \sum_{j=1}^{n} |\varepsilon_j|\\
    & \leq 20 n  e^{- \pi \alpha (n/2)^2}\sum_{j=1}^{n} |\varepsilon_j|  \leq 20 n^{3/2}  e^{-\pi \alpha (n/2)^2}  \left(\sum_{j=1}^{n} \varepsilon_j^2 \right)^{1/2} \ll \frac{\| g_1\|_{L^2}}{n^{100}}.
\end{align*}
A similar argument can be applied to $h$. We argue that
\begin{align*}
      \| h(x) \|_{L^{\infty}}   &=  \left\|   \sum_{\substack{ |k| \geq n+1 \\ n \, \nmid \, k}} e^{-\pi \alpha k^2} \left( \sum_{j=1}^{n}  e^{2 \pi i k \varepsilon_j } e^{-2 \pi i k  \frac{j}{n} }\right)e^{2 \pi i k x} \right\|_{L^{\infty}} \\
      &\leq    \sum_{\substack{ |k| \geq n+1 \\ n \, \nmid \, k}} e^{-\pi \alpha k^2}  2\pi k \sum_{j=1}^{n}  |\varepsilon_j|  \leq \left(\sum_{j=1}^{n} \varepsilon_j^2 \right)^{1/2} \sqrt{4 \pi^2 n}   \sum_{\substack{ |k| \geq n+1 \\ n \, \nmid \, k}} e^{-\pi \alpha k^2} k.
\end{align*}
We deduce that, again for $n$ sufficiently large,
$$\left\|  h \right\|_{L^{\infty}} \ll \frac{\| g_1\|_{L^2}}{n^{100}}.$$
We can now conclude the argument
\begin{align*}
    & \, \sum_{j=1}^{n} \theta_\alpha \left(x - \frac{j}{n} + \varepsilon_j \right) = A(x)+g_1(x) + g_2(x) + h(x)\\
    = & \, \sum_{j=1}^{n} \theta_\alpha \left(x - \frac{j}{n}  \right) + \mathcal{O}\left(n  e^{-\pi \alpha n^2}  \sum_{j=1}^{n} \varepsilon_j^2\right) + g_1(x) + g_2(x) + h(x).
\end{align*}
Applying all the prior results, for $n$ sufficiently large, we get
\begin{align*}
    \min_x f(x) & \leq \min_{0 \leq k \leq n-1} \sum_{j=1}^{n} \theta_\alpha \left( \frac{k+1/2}{n} - \frac{j}{n} + \varepsilon_j \right)\\
    & \leq   \left[\min_{0 \leq x \leq 1} \sum_{j=1}^{n} \theta_\alpha \left(x - \frac{j}{n}  \right) \right] +\mathcal{O}\left(n  e^{-\pi \alpha n^2}  \sum_{j=1}^{n} \varepsilon_j^2\right) \\
    & \qquad - \frac{\|g_1\|_{L^2}}{n^{3/2}} + \|g_2\|_{L^{\infty}} + \|h\|_{L^{\infty}} \\
    &\leq  \left[\min_{0 \leq x \leq 1} \sum_{j=1}^{n} \theta_\alpha \left(x - \frac{j}{n}  \right) \right]  - \frac{1}{2}\frac{\|g_1\|_{L^2}}{n^{3/2}}. 
\end{align*}
Recalling that
$$    \|g_1(x)\|_{L^2} \geq \frac{ e^{-\pi \alpha \left( \frac{n-1}{2} \right)^2}}{2} \left(\sum_{j=1}^{n} \varepsilon_j^2 \right)^{1/2}$$
we deduce that the minimal value of $f(x)$ is maximal if and only if
$$ \varepsilon_1 = \varepsilon_2 = \dots = \varepsilon_n = 0.$$
As the equidistributed points provide that the minimum is taken exactly in between them, we obtain equality in the last calculation and, hence, derive our main result.

\section{Small \texorpdfstring{$n$}{n} and shifting one point}\label{sec:n}
The case when $n$ is small needs, as mentioned in \S \ref{sec:intro}, new ideas. We have not tried to find solutions for say $n=3,4,5$ and we believe it is a hard problem. However, at least the case $n=2$, i.e., $\Gamma = \Z \cup (\Z + c)$, is fairly easy: the fact that $x=1/2$ gives the minimizer of $\theta_\alpha(x)$ suggests that we should place the second point exactly midway between the integers. It follows from Proposition \ref{pro:minimum_1/2} that we now have minima at $1/4$ and $3/4$ (in between the maxima at $0$, $1/2$ and $1$). Taking these as points of reference it is not hard to show that the equispaced distribution is optimal. In fact, this idea leads to the following generalization.
\begin{lem}
    Let $\alpha > 0$ be fixed, $x_1 \in [0,1)$ be arbitrary and $x_k = (k-1)/n$, for $k=2, \ldots n$. Then $p_\alpha$ has maximal polarization if and only if $x_1 = 0$. Equivalently, $\min_x f_\alpha(x)$ is maximal if and only if $x_1 = 0$, i.e., if the points are equispaced.
\end{lem}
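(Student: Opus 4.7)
The plan is to compare $f_\alpha$ directly to the equispaced case via the identity
\[
   f_\alpha(x) = \sum_{k=1}^n \theta_\alpha(x - x_k) = n\,\theta(nx;n^2\alpha) + \bigl[\theta_\alpha(x-x_1) - \theta_\alpha(x)\bigr],
\]
obtained by adding and subtracting $\theta_\alpha(x)$ and invoking \eqref{eq:theta_min_periodic} for the resulting equispaced sum. By Proposition \ref{pro:minimum_1/2}, when $x_1 = 0$ the minimum of $f_\alpha$ equals $n\,\theta(1/2;n^2\alpha)$ and is attained at each of the midpoints $m_j := (2j-1)/(2n)$, $j=1,\dots,n$. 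Since $n\,\theta(n m_j;n^2\alpha) = n\,\theta(1/2;n^2\alpha)$ for every $j$, to conclude $\min_x f_\alpha < n\,\theta(1/2;n^2\alpha)$ it suffices to produce, for any admissible $x_1 \in (0,1)\setminus\{1/n,\dots,(n-1)/n\}$, a single midpoint $m_j$ with $\theta_\alpha(m_j - x_1) < \theta_\alpha(m_j)$.

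The key analytic ingredient is strict unimodality of $\theta_\alpha$: rewriting \eqref{eq:triple_product} as an infinite product of strictly positive factors, each strictly monotone in $\cos(2\pi x)$, one reads off that $y \mapsto \theta_\alpha(y)$ is strictly decreasing in the circle distance $\|y\|_{\R/\Z}:=\operatorname{dist}(y,\Z)$ on $[0,1/2]$. The strict inequality above therefore reduces to the purely geometric question of finding a midpoint with $\|m_j - x_1\|_{\R/\Z} > \|m_j\|_{\R/\Z}$.

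To close, I would focus on the two extremal midpoints $m_1 = 1/(2n)$ and $m_n = 1 - 1/(2n)$, both at distance $1/(2n)$ from $\Z$. A short case analysis on $\R/\Z$ shows $\|m_1 - x_1\|_{\R/\Z} \leq 1/(2n)$ iff $x_1 \in [0, 1/n]$, and $\|m_n - x_1\|_{\R/\Z} \leq 1/(2n)$ iff $x_1 \in \{0\}\cup[1-1/n,1)$. For $n \geq 3$ their intersection is $\{0\}$; for $n = 2$ it is $\{0, 1/2\}$, and $1/2 = 1/n$ is already excluded by non-degeneracy. Hence for every non-equispaced admissible $x_1$ at least one of $m_1, m_n$ yields the desired strict inequality, and the equispaced configuration is the unique maximizer of the polarization. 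The only real obstacle is this elementary but fiddly case analysis on the circle; the conceptual content is the strict unimodality of the heat kernel on $\R/\Z$, supplied cleanly by the Jacobi triple product. The same scheme works verbatim in the scaled setting $\delta\Z$ and for every $\alpha>0$.
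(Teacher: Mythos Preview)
Your proof is correct and follows essentially the same route as the paper: both decompose $f_\alpha$ as the equispaced sum plus the single correction term $\theta_\alpha(x-x_1)-\theta_\alpha(x)$, invoke strict unimodality of $\theta_\alpha$ via the Jacobi triple product, and then test at the two extremal midpoints $m_1=1/(2n)$ and $m_n=1-1/(2n)$ to cover all admissible $x_1$. Your circle-distance formulation makes the case split slightly cleaner, but the argument is the same.
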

\begin{proof}
    It is seen from the product formula \eqref{eq:triple_product} that $\theta_\alpha(x)$ is symmetric in $x$ and a decreasing function on $(0,1/2)$ (see also \cite{Mon88}). Hence, we have
    \begin{equation}
        \theta_\alpha(y) < \theta_\alpha(x_0) = \theta_\alpha(1-x_0), \qquad y \in (x_0,1-x_0), \, x_0 \in [0,1/2).
    \end{equation}
    Recall from Proposition \ref{pro:minimum_1/2} that
    $$
    \min_x \sum_{k=1}^{n} \theta_\alpha\left(x-\frac{k-1}{n}\right) = \sum_{k=1}^{n} \theta_\alpha\left( \frac{\ell + 1/2}{n} - \frac{k-1}{n} \right), \quad \ell = 0, \ldots , n-1.
    $$
    Now, we pick $\ell = 0$ and compare (taking periodicity into account) values at $1/(2n)$:
    \begin{align}
        \sum_{k=1}^{n} \theta_\alpha \left( \frac{1}{2n} - \frac{k-1}{n} \right)
        & = \theta_\alpha\left( \frac{1}{2n} \right) + \sum_{k=1}^{n-1} \theta_\alpha\left( \frac{1}{2n} - \frac{k}{n} \right) \\
        & > \theta_\alpha\left( \frac{1}{2n} + y \right) + \sum_{k=1}^{n-1} \theta_\alpha\left( \frac{1}{2n} - \frac{k}{n} \right), \quad y \in \left(0, 1-\frac{1}{n} \right).
    \end{align}
    The inequality holds true when shifting by $y \in (-1+1/n,0)$ (so periodically to the right) and picking $\ell = n-1$, by symmetry.
\end{proof}

\textbf{Acknowledgment.} We are sincerely grateful for a large number of very helpful suggestions made by two remarkably thoughtful referees.

\end{document}